\documentclass[a4paper,reqno]{amsart}

\usepackage[latin1]{inputenc}
\usepackage[T1]{fontenc}
\usepackage[english]{babel}
\usepackage{amsfonts,amsmath,amssymb,amsthm}
\usepackage{mathrsfs}
\usepackage{graphicx}
\usepackage{color}
\usepackage{enumerate}

\newtheorem{theorem}{Theorem}[section]
\newtheorem{proposition}[theorem]{Proposition}
\newtheorem{lemma}[theorem]{Lemma}

\theoremstyle{definition}

\theoremstyle{remark}

\numberwithin{equation}{section}

\renewcommand{\P}{\mathbb P}
\newcommand{\N}{\mathbb N}
\newcommand{\E}{\mathbb E}
\newcommand{\R}{\mathbb R} 
\newcommand{\dd}{\mathrm d}
\newcommand{\eps}{\varepsilon}
\newcommand{\e}{\mathrm e}

\newcommand{\vect}{\boldsymbol}

\def\1{{\mathchoice {1\mskip-4mu\mathrm l}      
{1\mskip-4mu\mathrm l}
{1\mskip-4.5mu\mathrm l} {1\mskip-5mu\mathrm l}}}


\begin{document}

\title[Large deviations for the maximum  of Weibull variables]{Large deviations for the maximum and reversed order statistics of Weibull-like variables}

\date{27 May 2024} 
\author{Sabine Jansen}

\address{Mathematisches Institut, LMU Munich, Theresienstr. 39, 80333 Munich, Germany}
\email{jansen@math.lmu.de} 

\begin{abstract}
	Motivated by metastability in the zero-range process, we consider i.i.d.\ random variables with values in $\N_0$ and Weibull-like (stretched exponential) law $\P(X_i =k) = c \exp( - k^\alpha)$, $\alpha \in (0,1)$. We condition on large values of the  sum $S_n= \mu n + s n^\gamma$ and prove large deviation principles for the rescaled maximum $M_n /n^\gamma$ and for the reversed order statistics. The scale is $n^\gamma$ with $\gamma = 1/(2-\alpha)$; on that scale, the big-jump principle for heavy-tailed variables and a naive normal approximation for moderate deviations yield bounds of the same order $n^{\gamma \alpha} = n^{2\gamma-1}$, the speed of the large deviation principles.  The rate function for $M_n/n^\gamma$ is non-convex and solves a recursive equation similar to a Bellman equation. \\
	
	\medskip 
	\noindent \emph{Keywords:} 
	heavy-tailed random variables; large deviation principle; condensation in the zero-range process; non-convex free energy. 
	
	\medskip
	\noindent \emph{MSC2020}: 
	60F10; 
 	60G50; 
	60K35. 
\end{abstract}

\maketitle

\section{Introduction} 

Fix $\alpha\in (0,1)$ and let $c = 1/\sum_{k\in \N_0} \exp(-k^\alpha)$. 
Consider $n$ i.i.d., integer-valued random variables $X_1,X_2,\ldots$ with stretched exponential law
\begin{equation} \label{eq:stretched}
	\P(X_i = k) = c \exp( - k^\alpha) \qquad (k\in \N_0),
\end{equation}
expected value $\mu = \E[X_i]$ and variance $\sigma^2$. 
Set $\gamma = 1/(2-\alpha)$. 
It is known that the partial sums $S_n= X_1+\cdots + X_n$ satisfy, for every $s>0$,
\begin{equation} \label{eq:sumasympt}
	\lim_{n\to \infty} \frac{1}{n^{\gamma\alpha}} \log \P( S_n = \lfloor \mu n + s n^\gamma\rfloor ) = - \min_{x\in [0,s]} \Bigl( x^\alpha + \frac{1}{2\sigma^2} (s-x)\Bigr)^2.
\end{equation}
Eq.~\eqref{eq:sumasympt} follows from results by Nagaev \cite{nagaev1968}, see also Armend{\'a}riz, Grosskinsky, Loulakis \cite{armendariz-grosskinsky-loulakis2013},  Ercolani, Jansen and Ueltschi \cite{ercolani-jansen-ueltschi2019} and Brosset, Klein, Lagnoux and Petit~\cite{brosset-klein-lagnoux-petit2022}.  For small $s$ the minimum on the right side is attained at $x=0$, for large $s$ the origin $x=0$ is only a local minimizer and the global minimum is attained at some larger $x>0$. The two different global minimizers correspond to two different behaviors for the maximum $M_n = \max(X_1,\ldots,X_n)$ under the conditioned law $\P(\cdot \mid S_n = \lfloor \mu n + s n^\gamma\rfloor)$. For small $s$, $M_n/n^\gamma\to 0$ in law  and the maximum is typically of the order of some power of $\log n$ \cite[Theorem 2.3]{armendariz-grosskinsky-loulakis2013}; for large $s$, $M_n/n^\gamma$ converges instead to the non-zero minimizer in the right side of Eq.~\eqref{eq:sumasympt}. 

These considerations suggest an underlying large deviation principle for the distribution of $M_n/n^\gamma$ under the condition $S_n = \lfloor \mu n + s n^\gamma\rfloor$. A natural guess is that the rate function should be, up to an additive constant, precisely the function minimized on the right side of~\eqref{eq:sumasympt}. It turns out that this is not true for all $s$. Our main result is a large deviation principle for the rescaled maximum in which the rate function solves a recursive equation (Eq.~\eqref{eq:inf-convolution} and Theorem~\ref{thm:maxldp}), and we prove that in general the rate function is different from the natural guess (Theorem \ref{thm:dyntrans}). In addition, we prove a large deviation principle for the full sequence of reversed order statistics (maximum, second largest variable, third largest, etc.; Theorem~\ref{thm:main}). 

From the point of view of the theory of heavy-tailed variables \cite{denisov-dieker-shneer2008}, our results are a nice but, perhaps, somewhat unessential addendum. In fact our proofs are fairly elementary. The relevance of our results instead comes from interacting particle systems, more precisely metastability for the zero-range process. 

The zero-range process on the lattice $\{1,\ldots,n\}$ is a continuous-time Markov process $(\eta(t))_{t\geq 0}$ with state space $\N_0^n$ \cite{spitzer1970}. The configuration at time $t$ is denoted $(\eta_x(t))_{x=1,\ldots,n}$.  
Particles may hop from site $x$ to site $y$, thus the possible transitions are from $\eta$ to $\eta^{x,y}$ where $\eta_z^{x,y} = \eta_z - \delta_{x,z} + \delta_{y,z}$. Hopping probabilities are governed by a stochastic matrix $(p(x,y))_{1\leq x,y\leq n}$ with zeros on the diagonal  and by rates $g(n)$, $n\in \N_0$, with $g(0)=0$. The generator is 
\[
	Lf (\eta) = \sum_{\substack{1\leq x,y\leq n:\\ x\neq y}}  g(\eta_x) p(x,y) \bigl( f( \eta^{x,y}) - f(\eta)\bigr).
\] 
We assume $p(x,y) = p(y,x)$ for all $x,y$ and choose rates
\[
	g(n) = \frac{\exp( - (n-1)^\alpha)}{\exp( - n^\alpha)} =  1+ \frac{\alpha}{n^{1-\alpha}} + o\Bigl( \frac{1}{n^{1-\alpha}}\Bigr).
\] 
When started in an $N$-particle configuration ($\sum_{x=1}^n \eta_x = N$) with fixed $N\in \N$, in the long run the process converges in law to  an equilibrium distribution that is exactly the law $P_{n,N}$ of $n$ stretched exponential variables conditioned on $X_1+\cdots + X_n = N$. 

If the system size $n$ and particle excess $N- \mu n$ are large, the equilibrium measure gives high probability to configurations in which one of the $n$ sites swallows of all the excess $\eta_x \approx N - \mu n$, the \emph{condensate}, while all other sites have small occupation numbers \cite{armendariz-loulakis2009, grosskinsky-schuetz-spohn2003}. This raises the following question: If we start in a configuration with a large excess $N-\mu n$ evenly spread over the $n$ sites, how long does it take until a condensate---a site with $\eta_x \approx N-\mu n$--- is formed? 

In a \emph{metastable} regime one expects a large transition time, ideally described with some barrier in an energy landscape \cite{berglund2013,bovier-denhollander-book}. Many results in this direction are available for other models, e.g., for dynamic Ising or Curie-Weiss models \cite{bovier-denhollander-book}. For the zero-range process, however, most metastability results have focused on the dynamics of the condensate once it has formed---how long does it take for the condensate to move from one site to another?  See, e.g., Beltr{\'a}n and Landim \cite{beltran-landim2012} and  Bovier and den Hollander \cite[Chapter 21]{bovier-denhollander-book}. An exception is Chleboun and Grosskinsky \cite{chleboun-grosskinsky2010} who discuss metastability on the same scale as ours, but without a theorem on the asymptotics of hitting times. 

Our results pave the way for addressing the open question of metastable condensation time. For large $s$, the rate function for the maximum $M_n /n^\gamma$ is non-convex, it has a local and a global minimizer separated by an energy barrier, and one may hope to set the potential-theoretic machinery for metastability to work. 

\section{Main results} 

Let $X_1,X_2,\ldots$ be i.i.d.\ stretched exponential, $\N_0$-valued variables with law $\P(X_i = k) = c\exp( - k^\alpha)$ as in~\eqref{eq:stretched}. The expected value, variance and partial sums are denoted $\mu$, $\sigma^2$, and $S_n$. The maximum is 
\[
	M_n = \max(X_1,\ldots, X_n).
\]
Set 
\[
	\gamma:= \frac{1}{2-\alpha}.
\]
Let $\mathcal D\subset \R_+^\N$ be the space of decreasing sequences $x_1\geq x_2\geq \cdots$. Given $s\in \R$, define $F_s:\mathcal D\to \R_+\cup \{\infty\}$ by
\[
	F_s(x_1,x_2,\ldots) = 
			\sum_{j=1}^\infty x_j^\alpha + \frac{1}{2\sigma^2}(s- \sum_{j=1}^\infty x_j)^2
\] 
and $f_s: \R_+ \to \R$ by 
\[
	f_s(y):= \inf \Bigl\{ F_s(x_1,x_2,\ldots):\, (x_j)_{j\in \N}\in \mathcal D,\ x_1 = y\Bigr\}.
\] 
Notice 
\begin{equation} \label{eq:inf-convolution}
	f_s(y) = y^\alpha + \inf_{z\in [0,y]} f_{s-y} (z). 
\end{equation} 
This equation is similar to Eq.~(6) in Chleboun and Grosskinsky~\cite{chleboun-grosskinsky2015} and to  Bellman equations from dynamic programming. 

In the following $(s_n)_{n\in \N}$ is a sequence in $\R$ such that $\mu n + s_n n^\gamma$ is in $\N_0$, for all $n\in \N$, and $\lim_{n\to \infty} s_n = s$. 

\begin{theorem}\label{thm:maxldp} 
	Let $Q_{n,s}$ be the distributions of $M_n/n^\gamma$ under the conditioned law $\P(\cdot \mid S_n =  \mu  n + s_n n^\gamma)$. Then $(Q_{n,s_n})_{n\in\N}$ satisfy the large deviation principle with speed $n^{\gamma\alpha}$ and good rate function $f_s - \inf_{\R_+} f_s$. 
\end{theorem}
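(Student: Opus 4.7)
My plan is to combine the big-jump principle for heavy-tailed variables with a Gaussian-type moderate deviation estimate for the bulk. On the scale $n^\gamma$ the two effects produce logarithmic costs of the same order: a value $X_i = x n^\gamma$ has tail cost $x^\alpha n^{\gamma\alpha}$, while a Gaussian displacement of the partial sum by $t n^\gamma$ costs $\tfrac{t^2}{2\sigma^2}n^{2\gamma-1}=\tfrac{t^2}{2\sigma^2}n^{\gamma\alpha}$, since $2\gamma-1=\gamma\alpha$. The functional $F_s$ is precisely the combined cost of a configuration whose big jumps are $x_1\geq x_2\geq \cdots$ and whose residual bulk fluctuation is $s-\sum_j x_j$, and $f_s(y)$ is its infimum subject to $x_1=y$. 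As a preparatory step I would establish a uniform local moderate deviation estimate for the truncated variables $\tilde X_i = X_i\1\{X_i\leq \eps n^\gamma\}$ of the form
\[
	\P\Bigl(\sum_{i=1}^m \tilde X_i = \mu m + t n^\gamma\Bigr) = \frac{1+o(1)}{\sigma\sqrt{2\pi m}}\exp\Bigl(-\frac{t^2}{2\sigma^2}n^{\gamma\alpha}\Bigr),
\]
uniformly in $m/n\to 1$ and in $t$ in a bounded set. This should follow from a standard local CLT combined with a truncated Cram\'er tilt; the truncation at $\eps n^\gamma$ is harmless because the tail of $X$ is super-polynomially thin on that scale.

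For the upper bound I would decompose $\{M_n \in ((y-\delta)n^\gamma,(y+\delta)n^\gamma]\}\cap \{S_n = \mu n + s_n n^\gamma\}$ by the number $K_n^\eps = \#\{i: X_i \geq \eps n^\gamma\}$ of ``big'' variables and by a $\delta$-grid discretization $(a_1\geq\cdots\geq a_k)$ of their values with $a_1\in (y-\delta,y+\delta]$. By exchangeability each contribution factors as a polynomial factor in $n$ times
\[
	\prod_{j=1}^k \P\bigl(X \in [a_j n^\gamma, (a_j+\delta)n^\gamma)\bigr)\cdot \P\Bigl(\tilde S_{n-k} = \mu n + s_n n^\gamma - \sum_{j=1}^k\lfloor a_j n^\gamma\rfloor\Bigr),
\]
which is in turn at most $\exp\bigl(-n^{\gamma\alpha} F_s(a_1,\ldots,a_k,0,0,\ldots)\bigr)$ times another polynomial factor, using the stretched exponential tail and the moderate deviation lemma. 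Summing over the polynomially many discretization tuples and over $k \leq K$ (the tail $k>K$ being negligible because $\sum a_j^\alpha \geq k\eps^\alpha$), dividing by $\P(S_n=\mu n+s_n n^\gamma) \asymp \exp(-n^{\gamma\alpha}\inf_{\R_+} f_s)$, and letting $\delta,\eps \to 0$ would yield
\[
	\limsup_{n\to\infty} \frac{1}{n^{\gamma\alpha}} \log Q_{n,s_n}\bigl([y-\delta,y+\delta]\bigr) \leq -\bigl(f_s(y) - \inf_{\R_+} f_s\bigr) + o_{\delta,\eps}(1).
\]

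For the lower bound, given $y$ and an approximate minimizer $(x_1,\ldots,x_k,0,0,\ldots)$ of $F_s$ with $x_1 = y$, I would consider the event that $X_1,\ldots,X_k$ each lie in a narrow window around $\lfloor x_j n^\gamma\rfloor$ and that the remaining truncated sum $\tilde S_{n-k}$ assumes the required complementary value; independence and the moderate deviation lemma yield a matching lower bound, the combinatorial placement of the big jumps among $n$ indices being polynomial and hence negligible on the scale $n^{\gamma\alpha}$. Extending from local estimates at each $y$ to an LDP on open and closed sets uses the lower semicontinuity of $f_s$ (immediate from its definition as an infimum) together with the coercive bound $f_s(y) \geq y^\alpha$, which furnishes both goodness and exponential tightness. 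The normalization coincides with the right-hand side of~\eqref{eq:sumasympt} since, by strict concavity of $x\mapsto x^\alpha$ for $\alpha<1$, the infimum of $F_s$ over decreasing sequences is attained with at most one nonzero coordinate. The main technical obstacle I foresee is the control of the ``many small big jumps'' regime in the upper bound: one must show that the combinatorial entropy cannot overcome the exponential cost uniformly in $k$ and in the discretized sizes, which relies on the concavity of $x\mapsto x^\alpha$ and on a careful coupling of the cutoff $\eps$, the grid mesh $\delta$, and the truncation level $K$.
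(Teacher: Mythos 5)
Your proposal is essentially correct and rests on the same two ingredients as the paper's own argument --- a moderate-deviation estimate for the truncated partial sums (your preparatory lemma plays the role of the paper's Proposition~\ref{prop:normal}, which is proved via a truncated Cram\'er tilt $\varphi_n(t)$ and a local CLT) and a big-jump decomposition of the large coordinates --- but the route is genuinely different. The paper does not prove the LDP for $M_n/n^\gamma$ directly: it first establishes the LDP for the entire reversed-order-statistics vector $(X_{[j:n]}/n^\gamma)_{j\in\N}$ in the product topology on $\mathcal D$ with rate function $F_s - \inf_{\mathcal D}F_s$ (Theorem~\ref{thm:main}), then obtains Theorem~\ref{thm:maxldp} from the contraction principle applied to the first-coordinate projection. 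For the upper bound the paper avoids your $\delta$-grid altogether: it fixes a cutoff index $\ell$ and a threshold $\kappa$, bounds the complement events by $\P(M_n>m n^\gamma)\leq \e^{-m^\alpha n^{\gamma\alpha}(1+o(1))}$ and $\P(X_{[\ell:n]}>\kappa n^\gamma)\leq \e^{-\ell\kappa^\alpha n^{\gamma\alpha}(1+o(1))}$, and in the main term maximizes over $\ell$-tuples $(m_1,\dots,m_\ell)$ directly and identifies the limit with an $\ell$-dimensional infimum over the projected set; the grid/entropy bookkeeping you flag as a technical obstacle is sidestepped because the infimum absorbs it. Your grid-sum construction buys a shorter, self-contained argument for the maximum alone, whereas the paper's route gives the order-statistics LDP for free and removes the need to control the combinatorics of discretized tuples uniformly in $k$.

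One claim you should not wave through: you write that lower semicontinuity of $f_s$ is ``immediate from its definition as an infimum.'' In general the partial infimum $f(y)=\inf_{x}F(x,y)$ of even a jointly continuous $F$ over a non-compact parameter set need not be lower semicontinuous. What saves the argument here is that $F_s$ is itself lower semicontinuous on $\mathcal D$ with \emph{compact} sublevel sets --- the paper's Lemma~\ref{lem:lsc} proves this, and the nontrivial part is that coordinatewise convergence $x_j^{(m)}\to x_j$ does not by itself imply $\sum_j x_j^{(m)}\to\sum_j x_j$; one needs the uniform bound $\sum_j(x_j^{(m)})^\alpha\leq C$ to control the tails. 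Given compactness of sublevel sets and closedness of the constraint $\{x_1=y\}$, the partial infimum $f_s$ is indeed lower semicontinuous with compact level sets, but this should be recorded rather than asserted. The rest of your outline (the negligibility of $k>K$ by $\sum a_j^\alpha\geq k\eps^\alpha$, the polynomial placement factor, the identification $\inf_{\R_+}f_s=\inf_{\mathcal D}F_s=\min_x g_s(x)$ via subadditivity of $x\mapsto x^\alpha$) is sound and matches the ingredients of Propositions~\ref{prop:lb} and~\ref{prop:ub}.
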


Thus $f_s$ is lower semicontinuous with compact level sets and for every closed set $A\subset \R_+$
\[
	\limsup_{n\to \infty} \frac{1}{n^{\gamma\alpha}} \log \P\Bigl(\frac{M_n}{n^\gamma} \in A\mid S_n = \mu n + s_n n ^\gamma\Bigr)\leq - \inf_{y\in A} \bigl( f_s(y) - \inf_{\R_+} f_s\bigr)
\] 
and for every open set $O \subset \R_+$
\[
	\liminf_{n\to \infty} \frac{1}{n^{\gamma\alpha}} \log \P\Bigl(\frac{M_n}{n^\gamma} \in O\mid S_n = \mu n + s_n n ^\gamma\Bigr) \geq - \inf_{y\in O} \bigl( f_s(y) - \inf_{\R_+} f_s\bigr).
\] 
Theorem~\ref{thm:maxldp} follows, by the contraction principle \cite[Chapter 4.2]{dembo-zeitouni-book}, from the large deviation principle for the full reversed order statistics. Rearranging $X_1,\ldots, X_n$ in decreasing order, we obtain a vector $(X_{[1:n]}, \ldots, X_{[n:n]})$ with 
\[
	X_{[1:n]} \geq X_{[2:n]} \geq \cdots \geq X_{[n:n]}.
\]
In particular, $X_{[1:n]} = \max (X_1,\ldots, X_n)=M_n$. In addition we define $X_{[j:n]} =0$ for $j\geq n+1$. We equip the space $\mathcal D$ of decreasing sequences with  the trace of the product topology.

\begin{theorem} \label{thm:main}
	Let $\mathbb Q_{n,s_n}$ be the distribution of $(\frac{1}{n^\gamma} X_{[j:n]})_{j\in\N}$ under the conditioned law $\P(\cdot \mid S_n = \mu  n + s_nn^\gamma)$. Then $(\mathbb Q_{n,s_n})_{n\in \N}$ satisfies the large deviation principle withs speed $n^{\gamma\alpha}$ and  good rate function $I_s = F_s - \inf_\mathcal D F_s$. 
\end{theorem}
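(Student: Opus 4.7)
The plan is to reduce the infinite-dimensional LDP to a family of finite-dimensional LDPs for the projections $\pi_k : (x_j)_{j \in \N} \mapsto (x_1, \ldots, x_k)$ via the Dawson--G\"artner projective-limit theorem \cite[Theorem 4.6.1]{dembo-zeitouni-book}, and to establish exponential tightness separately. Tightness is the easy step: $K_R := \mathcal D \cap [0, R]^\N$ is compact in the product topology, a union bound gives $\P(M_n > R n^\gamma) \leq n c \exp(- R^\alpha n^{\gamma\alpha})$, and combined with the lower bound afforded by \eqref{eq:sumasympt} this yields $n^{-\gamma\alpha} \log \mathbb Q_{n, s_n}(K_R^c) \to -\infty$ as $R \to \infty$.

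The substance of the proof is the finite-dimensional LDP for $(\pi_k)_* \mathbb Q_{n, s_n}$ with good rate function $F_s^{(k)}(y) = \inf \{F_s(x): x \in \mathcal D,\ \pi_k(x) = y\} - \inf_{\mathcal D} F_s$. I would argue by the classical big-jump-plus-moderate-deviations decomposition. For the upper bound on a cylinder event $\{X_{[j:n]}/n^\gamma \approx y_j,\ j \leq k\}$, I tag the indices $i_1 < \cdots < i_k$ at which the $k$ largest values are attained; the combinatorial factor $\binom{n}{k}$ is subexponential at speed $n^{\gamma\alpha}$. By independence the probability factorises into a big-jump term $\prod_j c \exp(- y_j^\alpha n^{\gamma\alpha})$ and the probability that a sum of $n - k$ i.i.d.\ stretched-exponential variables truncated at $y_k n^\gamma$ equals $\mu(n-k) + (s_n - \sum_j y_j) n^\gamma$. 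A local moderate-deviation estimate of Nagaev type bounds this last probability by
\[
\frac{C}{\sqrt{n}} \exp\Bigl( - \frac{(s - \sum_j y_j)^2}{2 \sigma^2} n^{\gamma\alpha} (1 + o(1)) \Bigr).
\]
Dividing by \eqref{eq:sumasympt} gives the upper bound. The matching lower bound comes from fixing a specific configuration of large values and invoking a local moderate-deviation lower bound for the truncated sum. The identification $F_s = \sup_k F_s^{(k)} \circ \pi_k$ in the Dawson--G\"artner step is by monotone convergence, using $\alpha < 1$ so that $\sum x_j^\alpha < \infty$ forces $\sum x_j < \infty$.

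The main obstacle is producing a sharp Nagaev-type local moderate-deviation estimate for the \emph{truncated} sum at the level $y_k n^\gamma$, precisely where the big-jump regime meets the Gaussian moderate-deviation regime, and doing so uniformly over target values in a compact window and across truncation thresholds (including the limits $y_k \to 0$ and $y_k \to \infty$, the latter recovering the untruncated asymptotic \eqref{eq:sumasympt}). Extracting such an estimate from \cite{nagaev1968, armendariz-grosskinsky-loulakis2013, ercolani-jansen-ueltschi2019, brosset-klein-lagnoux-petit2022}, possibly via a tilting argument adapted to the truncated law, is the delicate technical step. The recursive structure \eqref{eq:inf-convolution} and the resulting non-convexity of $f_s$ make the geometry of the rate function interesting but do not alter the overall plan.
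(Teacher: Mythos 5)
Your projective-limit blueprint (Dawson--G\"artner plus exponential tightness) is a reasonable alternative framing; the paper instead proves the open/closed bounds directly on $\mathcal D$ (Propositions~\ref{prop:lb} and~\ref{prop:ub}) and then normalizes via the $\mathcal A=\mathcal O=\mathcal D$ case, but since the paper's upper bound also proceeds through finite projections $\mathcal A_{m,\ell}^{(\ell)}$, the two structures are close in spirit. Your exponential-tightness argument is fine, as is the monotone identification $F_s=\sup_k F_s^{(k)}\circ\pi_k$ (which needs the goodness of $F_s$, provided here by Lemma~\ref{lem:lsc}).

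The genuine gap is in your finite-dimensional step. Tagging the $k$ largest order statistics and truncating the remaining $n-k$ variables at level $y_k n^\gamma$ does \emph{not} land you in a regime where a Nagaev-type Gaussian local estimate is available for arbitrary $y_k$: when the truncation level is a large multiple of $n^\gamma$, the truncated sum can still be dominated by a single big summand and is simply not in the moderate-deviation Gaussian window. Indeed, the estimate you would need---uniform across all truncation thresholds, with $y_k\to\infty$ ``recovering the untruncated asymptotic \eqref{eq:sumasympt}''---does not exist in that form; the paper's Proposition~\ref{prop:normal} is valid only for $\kappa_n\in[\eps,\kappa]$ with $\kappa$ strictly below the threshold $(\sigma^2/b)^{-1/(1-\alpha)}$, and that restriction is essential (it is what makes the tilted, truncated cumulant generating function in Lemma~\ref{lem:truncation1} have uniformly bounded derivatives). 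The paper resolves this by introducing a \emph{second} cutoff parameter $\ell$, independent of the projection level: it truncates the remaining variables at a \emph{fixed small} $\kappa n^\gamma$, bounds separately the bad event that $\ell$ or more order statistics exceed $\kappa n^\gamma$ via the elementary big-jump estimate \eqref{eq:ub1} (which decays at rate $\ell\kappa^\alpha$), and only then sends $\ell\to\infty$ (Lemma~\ref{lem:infconv}). Your single-parameter scheme has no analogue of this step, so the upper bound at fixed $k$ for cylinders with $y_k$ not small fails as written; once you add the second cutoff and the associated truncation-error lemma, you are essentially reproducing the paper's argument, and the Dawson--G\"artner wrapper becomes optional.
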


The theorem is proven at the end of Section~\ref{sec:ub}. It says that
$I_s$ is lower semi-continuous with compact sublevel sets, and for every closed set $\mathcal A\subset \mathcal D$ (trace of the product topology) 
\begin{equation} \label{eq:ldp-ub}
	\limsup_{n\to \infty}\frac{1}{n^{\gamma\alpha}} \log \P\Biggl( \Bigl(\frac{X_{[j:n]}}{n^\gamma}\Bigr)_{j\in \N} \in \mathcal A\, \Big\vert\,S_n =  \mu n + s_n n^\gamma  \Biggr) \leq - \inf_{\mathcal A} I_s
\end{equation} 
and for every open set $\mathcal O\subset \R_+^\N$, 
\begin{equation} \label{eq:ldp-lb}
	\liminf_{n\to \infty}\frac{1}{n^{\gamma\alpha}} \log \P\Biggl(  \Bigl(\frac{X_{[j:n]}}{n^\gamma}\Bigr)_{j\in \N} \in \mathcal O\, \Big\vert\, S_n =  \mu n + s_n n^\gamma  \Biggr) \geq - \inf_{\mathcal O} I_s.
\end{equation} 
Turning back to the large deviation principle for the rescaled maximum $M_n /n^\gamma$, let us have a closer look at the rate function $f_s - \inf_{\R_+} f_s$. As noted in the introduction, the function $f_s$ should be  contrasted with the simpler function
\[
	g_s(y):= y^\alpha + \frac{1}{2\sigma^2} (s-y)^2
\] 
that appears on the right side of~\eqref{eq:sumasympt}. Define a threshold 
\[
	s_1:= \frac 1\gamma (\sigma^2)^\gamma (2-2\alpha)^{\gamma-1}. 
\] 
Then $y=0$ is a global minimizer for $g_s(y)$ if and only if $s\leq s_1$; for $s>s_1$ it is merely a local minimizer (see Appendix~\ref{app:variational}). Clearly 
\[
	f_s(y)\leq  F_s(y,0,0,\ldots)= g_s(y) 
\]
for all $s,y$. The next theorem says that there is an additional threshold $s_2>s_1$ beyond which $f_s$ and $g_s$ do not coincide. (They coincide for small and large $y$ but there is a middle range in which $f_s(y) < g_s(y)$.)

\begin{theorem} \label{thm:dyntrans}
	There exists $s_2\in (s_1,\infty)$ such that the following holds true: 
	\begin{enumerate} 
		\item [(a)] For $s\leq s_2$, we have $f_s(y) = g_s(y)$ for all $y\geq 0$. 
		\item [(b)] For $s >s_2$,
		the set $J_s = \{y: f_s(y) < g_s(y)\}$ is non-empty, and contained in some interval $(a_s,b_s)$ with $0< a_s<b_s< \infty$.
	\end{enumerate} 
\end{theorem}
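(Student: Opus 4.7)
My plan is to decompose the theorem into three pieces: (I) containment $J_s \subset (a_s, b_s)$ in a bounded subinterval of $(0, \infty)$ by identifying $f_s = g_s$ outside a strip of $y$-values; (II) an explicit two-jump test sequence showing $J_s \ne \emptyset$ for $s$ sufficiently large; and (III) a translation-based monotonicity argument combined with joint continuity of $(s, y) \mapsto f_s(y)$ that upgrades these bounds into a sharp threshold $s_2 \in (s_1, \infty)$.

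For (I), the key observation is that any decreasing sequence with $x_1 = y$ has $x_j \le y$ for all $j$, and since $\alpha < 1$ this yields $\sum_j x_j^\alpha \ge y^{\alpha-1} \sum_j x_j$. Setting $X = \sum_j x_j \ge y$ gives
\[
	f_s(y) \ge \inf_{X \ge y}\Bigl[ y^{\alpha-1} X + \frac{(s-X)^2}{2\sigma^2}\Bigr].
\]
The unconstrained minimizer $X_\ast = s - \sigma^2 y^{\alpha-1}$ lies below $y$ exactly when $s \le y + \sigma^2 y^{\alpha-1}$, in which case the constrained minimum is at $X = y$ with value $g_s(y)$, so $f_s(y) = g_s(y)$ in this regime. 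The function $y \mapsto y + \sigma^2 y^{\alpha-1}$ has a strictly positive minimum $s_\ast = (2-\alpha)(1-\alpha)^{\gamma - 1}\sigma^{2\gamma}$ on $(0, \infty)$, and a comparison with the formula for $s_1$ shows $s_1/s_\ast = 2^{\gamma - 1} < 1$. Hence for $s < s_\ast$ the condition holds for every $y > 0$, establishing the auxiliary claim $f_t = g_t$ for all $t \le s_1$. Plugging this into the recursion \eqref{eq:inf-convolution} with $t := s - y \le s_1$, and using that $g_t$ attains its global minimum at $0$ for $t \le s_1$, I get $f_s(y) = g_s(y)$ also for $y \ge s - s_1$. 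Combining both ranges, for $s > s_\ast$ I obtain $J_s \subset (y_-(s), \min(y_+(s), s - s_1))$, where $y_-(s) < y_+(s)$ are the two positive roots of $y + \sigma^2 y^{\alpha-1} = s$; this is a bounded subinterval of $(0, \infty)$.

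For (II), let $y^*(t) > 0$ be the positive global minimizer of $g_t$ for $t \ge s_1$, continuous in $t$ by the implicit function theorem, with $y^*(s_1) > 0$. Given $s > s_1 + y^*(s_1)$ and $\varepsilon > 0$ sufficiently small, set $y := s - s_1 - \varepsilon > 0$: then $s - y = s_1 + \varepsilon > s_1$ and $y^*(s-y) < y$ by continuity. The decreasing sequence $(y, y^*(s-y), 0, 0, \ldots) \in \mathcal D$ then gives
\[
	f_s(y) \le y^\alpha + g_{s-y}\bigl( y^*(s-y) \bigr) < y^\alpha + g_{s-y}(0) = g_s(y),
\]
so $y \in J_s$. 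Moreover, $\{s : J_s \ne \emptyset\}$ is upward closed: if $F_s(x_1, x_2, \ldots) < g_s(y)$ with $x_1 = y$, then for any $s' > s$ the sequence $(y + s' - s, x_2, x_3, \ldots)$ is still in $\mathcal D$, and its $F_{s'}$ value minus $g_{s'}(y + s' - s)$ equals $F_s(x_1, x_2, \ldots) - g_s(y) < 0$ (the $s$-dependence cancels in this difference), so $y + s' - s \in J_{s'}$.

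Finally for (III), set $s_2 := \inf\{ s : J_s \ne \emptyset \}$; parts (I) and (II) yield $s_2 \in [s_\ast, s_1 + y^*(s_1)] \subset (s_1, \infty)$. Part (a) at $s = s_2$ requires $J_{s_2} = \emptyset$; this follows once the set $\{(s, y) : f_s(y) < g_s(y)\}$ is known to be open, since then its $s$-projection is open and cannot contain its own infimum. Openness reduces to joint continuity of $(s, y) \mapsto f_s(y)$, which I expect to be the main technical obstacle: upper semicontinuity is immediate (use a near-optimizer of $f_s(y)$ as a test sequence for nearby $(s', y')$), but lower semicontinuity requires extracting a subsequential limit of near-optimizers in $\mathcal D$. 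The coercivity of $F_s$---both $\sum_j x_j^\alpha$ and the quadratic term $(s - \sum_j x_j)^2/(2\sigma^2)$ grow unboundedly as the sequence escapes---together with a truncation to finitely many nonzero entries (exploiting subadditivity of $u \mapsto u^\alpha$ to cap the support by roughly $\lceil X/y \rceil$ entries, each at most $y$) reduces the problem to compactness on a simplex in $\R^K$ with bounded $K$. Once joint continuity is established, part (a) is complete, and part (b) then follows directly from (I) and (II).
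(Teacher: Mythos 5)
Your approach is genuinely different from the paper's, and the core ideas (I) and (II) are sound. The paper instead characterizes exactly when $f_s(y) < g_s(y)$ via the recursion: Lemma~\ref{lem:var2} reduces the comparison to whether $z=0$ minimizes $g_{s-y}$ on $[0,y]$, and Lemma~\ref{lem:var3} translates this into the explicit condition $y_0(s-y) < y < s-s_1$ involving the function $y_0(t) = \inf\{y>0 : g_t(y)=g_t(0)\}$. The threshold $s_2$ is then defined purely in terms of $g$, and the delicate boundary case $s=s_2$ is handled by continuity of $g$ alone, which is elementary. Your route trades this for a direct lower bound on $f_s$ using $x_j^\alpha \geq y^{\alpha-1}x_j$ for $x_j\leq y$, a nice explicit two-block competitor for nonemptiness, and a clean translation invariance $F_{s'}(y+s'-s,x_2,\ldots) - g_{s'}(y+s'-s) = F_s(\vect x) - g_s(y)$ for monotonicity. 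Your lower-bound lemma in~(I) is actually independently useful: it supplies the converse inequality $f_s(y)\geq g_s(y)$ that the paper's Lemma~\ref{lem:var2} implicitly needs for its ``if'' direction.

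The genuine gap is in (III): openness of $\{(s,y): f_s(y)<g_s(y)\}$, which you reduce to joint continuity of $(s,y)\mapsto f_s(y)$ and acknowledge as the ``main technical obstacle.'' That continuity is true and provable (the lower semicontinuity argument is essentially the paper's Lemma~\ref{lem:lsc} with $s$ allowed to vary, using Fatou plus the observation that $\sum_j x_j^{(m)}\to\sum_j x_j$ along a bounded-$F$ sequence converging in the product topology), but it is not elementary and your sketch via truncation to finitely many entries is more convoluted than needed. Moreover, the full joint continuity is overkill here. Notice that your translation argument can be sharpened: fixing $\vect x$ with $x_1=y$ and differentiating in $s'$ gives $\tfrac{d}{ds'}\bigl[F_{s'}(\vect x)-g_{s'}(y)\bigr] = (y - \sum_j x_j)/\sigma^2 \leq 0$, so in fact $J_s\subset J_{s'}$ for $s\leq s'$, not merely upward-closedness of $\{s:J_s\neq\varnothing\}$. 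Then, for each fixed $y$, the set $\{s : y\in J_s\}$ is an up-set, and one only needs one-sided \emph{upper} semicontinuity of $s\mapsto f_s(y)$ at fixed $y$ to conclude it is open; but that usc is immediate (plug a near-optimizer of $f_s(y)$ into $F_{s'}$ and let $s'\to s$), exactly as you observe for the easy half. The union $\{s:J_s\neq\varnothing\}=\bigcup_y\{s:y\in J_s\}$ is then an open up-set, hence equal to $(s_2,\infty)$, which closes (a) at $s=s_2$ without any lower semicontinuity of $f$ at all. With this fix your proof is complete and self-contained.
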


The theorem is proven in Section~\ref{sec:dyntrans}. 
The additional treshold $s_2$ is related to the dynamical transition identified by Chleboun and Grosskinsky for a zero-range process with size-dependent rates \cite{chleboun-grosskinsky2015}. Chleboun and Grosskinsky derive a free energy landscape for the maximum and second largest  occupation number, similar to ours. They study the motion of the condensate and identify two possible mechanisms: either the condensate dissolves and is formed anew at another site, or a second condensate is grown in a neighboring site and eventually absorbs the original condensate. The density $\rho_{\mathrm{dyn}}$ beyond which the second mechanism is more likely is identified with formulas similar to the recursive equation~\eqref{eq:inf-convolution}.

\section{Proofs} 

\subsection{Lower semi-continuity} 

The rate function in a large deviation principle is \emph{good} if it is lower semicontinuous with compact level sets. The goodness of our rate functions is a consequence of the following lemma. 

\begin{lemma} \label{lem:lsc} 
	The function $F_s$ is lower semicontinuous on $\mathcal D$ and it has compact level sets. 
\end{lemma}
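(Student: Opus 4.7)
The plan is to exploit the product (pointwise-convergence) topology on $\mathcal{D}$ by combining Fatou's lemma for the $\alpha$-sum with a dominated-convergence argument for the linear sum $\sum_j x_j$. The decreasing constraint is what makes the second ingredient work. For any $x \in \mathcal{D}$ the monotonicity $x_1 \geq \cdots \geq x_j$ gives
$$
j\, x_j^\alpha \leq \sum_{i=1}^j x_i^\alpha \leq \sum_{i=1}^\infty x_i^\alpha \leq F_s(x),
$$
so $x_j \leq (F_s(x)/j)^{1/\alpha}$. Since $\alpha < 1$ forces $1/\alpha > 1$, this bound is summable in $j$.

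For lower semicontinuity, I would take $x^{(n)} \to x$ pointwise in $\mathcal{D}$. If $\liminf_n F_s(x^{(n)}) = \infty$ there is nothing to prove; otherwise pass to a subsequence along which $F_s(x^{(n)}) \to L < \infty$, and assume $F_s(x^{(n)}) \leq L+1$ throughout. The coordinate bound above then yields the summable majorant $x_j^{(n)} \leq ((L+1)/j)^{1/\alpha}$, so dominated convergence gives $\sum_j x_j^{(n)} \to \sum_j x_j$ and hence $(s - \sum_j x_j^{(n)})^2 \to (s - \sum_j x_j)^2$. Fatou's lemma on counting measure handles the $\alpha$-sum: $\sum_j x_j^\alpha \leq \liminf_n \sum_j (x_j^{(n)})^\alpha$. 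Adding the two gives $F_s(x) \leq L$.

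For compactness of $\{F_s \leq c\}$, the same pointwise bound places this sublevel set inside $K_c := \prod_{j \in \N} [0,(c/j)^{1/\alpha}]$, which is compact by Tychonoff. The set $\mathcal{D}$ is closed in $\R_+^\N$ (pointwise limits of decreasing sequences are decreasing), and $\{F_s \leq c\}$ is closed in $\mathcal{D}$ by the lower semicontinuity just proved. Hence it is a closed subset of the compact $K_c$, and therefore compact.

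The only step that requires care is the quadratic term $\tfrac{1}{2\sigma^2}(s - \sum_j x_j)^2$: on $\R_+^\N$ with the product topology the linear functional $\sum_j x_j$ is not continuous, because mass can escape to infinity. What rescues the argument is that the decreasing-sequence constraint turns the single finite quantity $\sum_j x_j^\alpha$ into a uniform summable pointwise dominator $j^{-1/\alpha}$, which is possible precisely because $\alpha < 1$; everything else is routine.
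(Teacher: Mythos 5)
Your proof is correct, and it takes a genuinely different route from the paper's for the lower semicontinuity of the linear sum. Both arguments handle $\sum_j x_j^\alpha$ by Fatou and then must show that $\sum_j x_j^{(n)} \to \sum_j x_j$; the paper does this via a truncation estimate, bounding the tail $\sum_{j>\ell} x_j \le x_\ell^{1-\alpha} \sum_{j>\ell} x_j^\alpha \le C\,x_\ell^{1-\alpha}$ (and likewise for $x^{(n)}$), passing $m\to\infty$ at fixed $\ell$, and then $\ell\to\infty$ using $x_\ell\to 0$. You instead extract from monotonicity the uniform pointwise decay $x_j \le (F_s(x)/j)^{1/\alpha}$, which is summable because $1/\alpha > 1$, and then invoke dominated convergence in one stroke. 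Both exploit the decreasing constraint and $\alpha<1$, but in complementary ways (the paper through the exponent $1-\alpha>0$ in the tail bound, you through the exponent $1/\alpha>1$ in the coordinate bound); your version avoids the double-limit bookkeeping and is arguably cleaner. The compactness argument is also fine and slightly more explicit than the paper's: the paper places the level set in the cube $[0,M^{1/\alpha}]^\N$ and leaves the closedness (from LSC) implicit, whereas you use the tighter product $\prod_j [0,(c/j)^{1/\alpha}]$ and spell out that $\mathcal D$ is closed and that LSC gives closed sublevel sets.
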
 

\begin{proof} 
Compactness of level sets is easy: For $M\in (0,\infty)$, the level set $\{\vect x:\, F_s(\vect x)\}$ is contained in $\mathcal D\cap [0,M^{1/\alpha}]^\N$, which is compact in the product topology. 

For the lower semicontinuity, let $(\vect x^{(m)})_{m\in \N}$ be a sequence in $\R_+^\N$ that converges in the product topology to some limiting $\vect x\in \R_+^\N$. Thus, writing $\vect x^{(m)} = (x_j^{(m)})_{m\in \N}$ and $\vect x=(x_j)_{j\in \N}$, we have $x_j^{(m)} \to x_j$ for all $j\in \N$. We have to show that
\[
	F_s(x_1,x_2,\ldots) \leq \liminf_{m\to \infty} F_s(x_1^{(m)}, x_2^{(m)},\ldots).
\] 
When the right side is infinite, the inequality is trivial. When the right side is finite, we may assume that 
\[
	C:= \limsup_{m\to \infty} F_s(x_1^{(m)}, x_2^{(m)},\ldots)<\infty
\] 
(pass to a subsequence if needed). By Fatou's lemma,
\[
	\sum_{j=1}^\infty x_j ^\alpha \leq \liminf_{m\to \infty} \sum_{j=1}^\infty (x_j^{(m)})^\alpha.
\] 
Next notice that for every $\ell \in \N$, 
\begin{equation} \label{eq:sosplit}
	\Bigl| \sum_{j=1}^\infty x_j^{(m)} - \sum_{j=1}^\infty x_j\Bigr| 
	\leq \sum_{j=1}^\ell|x_j^{(m)} -x_j| + \sum_{j>\ell} (x_j+ x_j^{(m)})
\end{equation}
and 
\[
	\sum_{j>\ell} x_j \leq x_\ell^{1-\alpha} \sum_{j>\ell} x_j^\alpha \leq C x_\ell^{1-\alpha}.
\] 
The sum  $\sum_{j>\ell} x_j^{(m)}$ is bounded in a similar way.  We insert these bounds into~\eqref{eq:sosplit} and pass to the limit $m\to \infty$ at fixed $\ell$. This gives 
\begin{equation}\label{eq:sosplit2}
	\limsup_{m\to \infty} 	\Bigl| \sum_{j=1}^\infty x_j^{(m)} - \sum_{j=1}^\infty x_j\Bigr| \leq 2 C x_\ell^{1-\alpha}.
\end{equation}
As $\sum_j x_j^\alpha \leq C<\infty$, we must have $x_\ell \to 0$ as $\ell\to \infty$. We take the limit $\ell \to\infty$ in~\eqref{eq:sosplit2} and find that $\sum_{j=1}^\infty x_j^{(m)} \to \sum_{j=1}^\infty x_j$. The lower semicontinuity of $F_s$ follows. 
\end{proof}

\subsection{Lemmas on truncation errors} 

Stretched exponential random variables are heavy-tailed: the moment generating function $\E[\exp( t X_i)]$ is infinite for all $t>0$.
Therefore, it is convenient to work with truncated generating functions. 

\begin{lemma}\label{lem:truncation1}
 Let $c_1>0$ and $t_n = c_1 n^{\gamma-1}$. Then, for every $c_2<c_1^{- 1/(1-\alpha)}$ and setting $m_n= \lfloor c_2 n^\gamma\rfloor $, we have for all $k\in \N$
	\[
		\sup_{n\in \N} \sum_{j=1}^{m_n} j^k \exp( t_n j - j^\alpha) <\infty. 
	\] 
\end{lemma}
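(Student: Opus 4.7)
The plan is to obtain an $n$-independent upper bound for each summand $j^k \exp(t_n j - j^\alpha)$ valid uniformly in $j\in \{1,\ldots, m_n\}$, and then dominate the partial sum by a convergent series that does not depend on $n$. The whole argument rests on the algebraic identity $(2-\alpha)\gamma = 1$, which is what makes the scale $n^\gamma$ critical for the linear term $t_n j$ against the stretched-exponential penalty $j^\alpha$.

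Concretely, I would start by checking that for every $j\in [1, m_n]$ one has $t_n j \leq \theta\, j^\alpha$, where $\theta := c_1 c_2^{1-\alpha}$. Indeed, rearranging the desired inequality gives $j^{1-\alpha}\leq c_2^{1-\alpha} n^{1-\gamma}$, i.e.\ $j \leq c_2\, n^{(1-\gamma)/(1-\alpha)}$. Using $\gamma = 1/(2-\alpha)$ one computes $(1-\gamma)/(1-\alpha) = \gamma$, so the bound becomes $j \leq c_2 n^\gamma$, which holds by the definition of $m_n$. Thus $t_n j - j^\alpha \leq -(1-\theta) j^\alpha$ throughout the range of summation.

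The hypothesis $c_2 < c_1^{-1/(1-\alpha)}$ is exactly what is needed to make $\theta \in (0,1)$: raising both sides to the $(1-\alpha)$-th power yields $c_2^{1-\alpha} < c_1^{-1}$, i.e.\ $\theta < 1$. Consequently, for every $n$,
\[
	\sum_{j=1}^{m_n} j^k \exp(t_n j - j^\alpha) \leq \sum_{j=1}^\infty j^k \exp\bigl(-(1-\theta) j^\alpha\bigr),
\]
and the right-hand side is finite since a stretched exponential with positive rate decays faster than any polynomial. Taking the supremum over $n\in \N$ gives the claim.

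There is no real obstacle here: the only care needed is in verifying the exponent arithmetic $\gamma-1 + (1-\alpha)\gamma = (2-\alpha)\gamma - 1 = 0$, which is where the threshold $c_1^{-1/(1-\alpha)}$ comes from in a critical (rather than strict) way. If the constants $c_1$ and $c_2$ were chosen with equality $c_2 = c_1^{-1/(1-\alpha)}$, the argument would break down because $\theta = 1$, and the $\exp(t_n j - j^\alpha)$ factors would fail to be summable; this explains the strict inequality in the hypothesis.
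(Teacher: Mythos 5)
Your proof is correct and is essentially the same as the paper's: both show that, for $j\leq m_n$, the linear term $t_n j$ is dominated by a constant-multiple $\theta j^\alpha$ with $\theta<1$, then dominate the truncated sum by the convergent series $\sum_j j^k \exp(-(1-\theta) j^\alpha)$. The only cosmetic difference is that you compute $\theta = c_1 c_2^{1-\alpha}$ explicitly whereas the paper works with a parameter $\delta=1-\theta$ and shows it can be chosen positive.
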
 

\begin{proof}
	For $\delta\in (0,1)$, we have $t_n j \leq (1-\delta) j^\alpha$ if and only if 
	$(1-\delta)^{-1} c_1 n^{\gamma-1} \leq j^{\alpha-1}$ i.e.\ $(1-\delta) c_1^{-1} n^{1-\gamma} \geq j^{1-\alpha}$ hence 
	\[
		j\leq \Bigl(\frac{1-\delta}{c_1}\Bigr)^{1/(1-\alpha)} n^{(1-\gamma)/(1-\alpha)}.
	\] 
	In view of
	\[
		\frac{1-\gamma}{1-\alpha} = \frac{1}{1-\alpha}\Bigl( 1 - \frac{1}{2-\alpha}\Bigr) = \frac{1}{1-\alpha} \frac{1-\alpha}{2-\alpha} = \gamma,
	\] 
   the	condition becomes 
	\[
		m_n \leq \Bigl(\frac{1-\delta}{c_1}\Bigr)^{1/(1-\alpha)} n^\gamma. 
	\] 
	Given $c_2 < c_1^{-1/(1-\alpha)}$, we can find $\delta>0$ such that the above inequality holds true, and we get
	\[
		\sum_{j=1}^{m_n} j^k \exp( t_n j - j^\alpha)  \leq \sum_{j=1}^\infty j^k \exp( - \delta j^\alpha) =: C_k(\delta)<\infty.  \qedhere
	\] 
\end{proof} 

\begin{lemma} \label{lem:truncation2}
	For every $k>0$, as $\ell\to \infty$, 
	\[
		\sum_{j>\ell} j^k \exp(-j^\alpha) = O\Bigl( \ell^{k+1-\alpha} \exp(- \ell^\alpha)\Bigr).
	\] 
\end{lemma}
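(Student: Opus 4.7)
The plan is to pass from the sum to an integral by monotonicity, then evaluate the resulting integral asymptotically by a change of variables that reduces it to an upper incomplete gamma function.

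First I would note that $\varphi(x) = x^k \exp(-x^\alpha)$ has derivative $\varphi'(x) = (k - \alpha x^\alpha)\, x^{k-1} \exp(-x^\alpha)$, which is strictly negative as soon as $x^\alpha > k/\alpha$. Hence for $\ell$ sufficiently large $\varphi$ is decreasing on $[\ell,\infty)$ and the standard sum–integral comparison yields
\[
\sum_{j>\ell} j^k \exp(-j^\alpha) \leq \int_\ell^\infty x^k \exp(-x^\alpha)\, \dd x;
\]
the finitely many small values of $\ell$ are absorbed into the implicit constant in the $O(\cdot)$ notation.

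Next I would substitute $u = x^\alpha$, so $x = u^{1/\alpha}$ and $\dd x = \tfrac{1}{\alpha} u^{1/\alpha - 1} \dd u$, obtaining
\[
\int_\ell^\infty x^k \exp(-x^\alpha)\, \dd x = \frac{1}{\alpha} \int_{\ell^\alpha}^\infty u^{(k+1)/\alpha - 1} \exp(-u)\, \dd u.
\]
A single integration by parts (or the substitution $u = t+v$ combined with dominated convergence) gives the classical tail asymptotic $\int_t^\infty u^{a-1} \exp(-u)\, \dd u = t^{a-1} \exp(-t)(1 + O(1/t))$ as $t \to \infty$, for any fixed $a \in \R$. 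Applying it with $a = (k+1)/\alpha$ and $t = \ell^\alpha$, so that $t^{a-1} = \ell^{k+1-\alpha}$ and $\exp(-t) = \exp(-\ell^\alpha)$, yields
\[
\int_\ell^\infty x^k \exp(-x^\alpha)\, \dd x = O\bigl(\ell^{k+1-\alpha}\exp(-\ell^\alpha)\bigr),
\]
which combined with the monotonicity bound establishes the lemma.

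There is essentially no obstacle: the only mildly delicate ingredient is the incomplete gamma asymptotic, but one integration by parts is enough for the $O$-bound required and no sharper error term is needed. The exponent $k+1-\alpha$ in the prefactor is precisely what the substitution $u=x^\alpha$ produces, since it converts the $x^k\, \dd x$ factor into $\frac{1}{\alpha} u^{(k+1)/\alpha - 1} \dd u$ and the tail of the gamma integral leaves one power of $u$ behind, giving $\ell^{\alpha \cdot ((k+1)/\alpha - 1)} = \ell^{k+1-\alpha}$.
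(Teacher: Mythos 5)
Your proof is correct and follows essentially the same route as the paper: a sum--integral comparison using eventual monotonicity of $x\mapsto x^k\exp(-x^\alpha)$, the substitution $u=x^\alpha$ to obtain an upper incomplete gamma function, and the standard tail asymptotic $\int_t^\infty u^{a-1}\e^{-u}\,\dd u \sim t^{a-1}\e^{-t}$. The only difference is that you sketch a justification (integration by parts) for the gamma asymptotic, which the paper simply cites as known.
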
 

\begin{proof}
	The function $\R_+\ni x\mapsto x^k \exp(-x^\alpha)$ is eventually decreasing, hence we may bound sums by integrals and find for large $\ell$
	\[
		\sum_{j>\ell} j^k \exp( - j^\alpha)  \leq \int_\ell^\infty u^k \exp( - u^\alpha) \dd u = \frac{1}{\alpha}\int_{\ell^{\alpha}}^\infty t^{(k+1)/\alpha - 1} \e^{-t} \dd t. 
	\]
	On the right side we recognize an incomplete Gamma function. It is known that for every $s>0$, as $x\to \infty$, 
	\[
		\int_x^\infty t^{s-1} \e^{-t} \dd t= \bigl(1+o(1)\bigr) x^{s-1} \e^{-x}. 
	\]
	The lemma follows. 
\end{proof}

\subsection{Normal approximation} 

When we condition on the maximum to be small enough, the normal approximation to moderate deviation events is justified. This is the key ingredient to our proofs. In this subsection we do not impose $s_n\to s$ and merely ask that $(s_n)$ is bounded and $\mu n + s_n n^\gamma \in \N_0$ for all $n$. 

\begin{proposition}\label{prop:normal}
	Let $[a,b]$ and $[\eps,\kappa]$ be compact intervals with $a < 0 < b$, $0< \eps < \kappa$ and $\kappa < (\sigma^2/b)^{-1/(1-\alpha)}$. 
	Further let $(s_n)_{n\in \N}$ be a sequence in $[a,b]$ and $(\kappa_n)_{n\in \N}$ a sequence in $[\eps,\kappa]$. Then 
	\[
		\Bigl| \log \P( M_n \leq \kappa_n n^\gamma, S_n = \mu n + s_n n^\gamma) + \frac{s_n^2}{2\sigma^2} n^{2\gamma-1}\Bigr| \leq C n^{3\gamma - 2}
	\] 	
	for all $n\in \N$ and some constant $C= C(a,b,\eps,\kappa)$. 
\end{proposition}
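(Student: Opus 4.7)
The plan is an exponential tilt (Cram\'er transformation) restricted to the truncation $\{X_i \leq m_n\}$, followed by a local central limit theorem for the tilted sum. Write $N_n = \mu n + s_n n^\gamma$ and $m_n = \lfloor \kappa_n n^\gamma\rfloor$, so the event of interest is $\{X_1,\ldots,X_n \leq m_n,\ S_n = N_n\}$. For $t\in\R$, set
\[
   Z_n(t) = \sum_{k=0}^{m_n} c\, e^{-k^\alpha + tk},\qquad \P_t(\tilde X = k) = \frac{c\,e^{-k^\alpha + tk}}{Z_n(t)},\ k\in\{0,\ldots,m_n\},
\]
and $\psi_n = \log Z_n$. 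A direct computation gives the tilt identity
\[
   \P(M_n\leq \kappa_n n^\gamma,\, S_n = N_n) = Z_n(t)^n\, e^{-tN_n}\, \P_t\!\Bigl(\textstyle\sum_{i=1}^n \tilde X_i = N_n\Bigr),
\]
valid for every $t$. The idea is to pick $t_n$ so that the tilted mean matches the target, $\psi_n'(t_n) = N_n/n = \mu + s_n n^{\gamma-1}$.

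The second step is a Taylor expansion of $\psi_n$ around $0$ to third order. By Lemma~\ref{lem:truncation2} the tail corrections to $Z_n(0)$, $Z_n'(0)$, $Z_n''(0)$ are exponentially small, so $\psi_n(0) = O(e^{-c m_n^\alpha})$, $\psi_n'(0) = \mu + O(e^{-c m_n^\alpha})$, $\psi_n''(0) = \sigma^2 + O(e^{-c m_n^\alpha})$. For the cubic remainder, Lemma~\ref{lem:truncation1} applied with $k = 3$ (using $c_1 = |t_n|/n^{\gamma-1} \asymp |s_n|/\sigma^2$ and $c_2 = \kappa$; the compatibility with the hypothesis on $\kappa$ is precisely what makes the truncation sub-exponential, so that $\sup_n \sum_{j=1}^{m_n} j^3 e^{t_n j - j^\alpha} < \infty$) bounds $|\psi_n'''|$ uniformly on the interval $[0,t_n]$. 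Solving $\psi_n'(t_n) = \mu + s_n n^{\gamma-1}$ yields $t_n = s_n n^{\gamma-1}/\sigma^2 + O(n^{2\gamma-2})$, and substituting back performs the usual cancellation in the Legendre transform:
\[
   -t_n N_n + n\psi_n(t_n) = -\frac{s_n^2}{2\sigma^2}\,n^{2\gamma-1} + O(n t_n^3) = -\frac{s_n^2}{2\sigma^2}\,n^{2\gamma-1} + O(n^{3\gamma-2}).
\]

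Finally, one has to show that $\log \P_{t_n}(\tilde S_n = N_n)$ is of lower order. Since $N_n$ equals the tilted mean by construction, a local central limit theorem yields $\P_{t_n}(\tilde S_n = N_n) = (1+o(1))/\sqrt{2\pi n\,\tilde\sigma_n^2}$, whose logarithm is $O(\log n)$ and is absorbed in the error term.

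The main obstacle is establishing a local CLT uniform in $n$ for the tilted law $\P_{t_n}$, which depends on $n$ both through $t_n$ and through the truncation $m_n$, so off-the-shelf references need adaptation. A characteristic-function approach should work: the second and third moments are uniformly bounded by Lemma~\ref{lem:truncation1}, the variance is bounded below by a constant (since $\P_{t_n}$ puts positive mass on $0$ and $1$ uniformly), and these positive masses on $0$ and $1$ remove any arithmetic obstruction. The only genuinely delicate point is to verify that the characteristic-function estimate away from zero holds uniformly in $n$ on the whole range $t_n \in [\eps' n^{\gamma-1}, \kappa' n^{\gamma-1}]$; everything else reduces, via the truncation lemmas, to routine expansions.
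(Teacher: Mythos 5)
Your proposal is correct and follows essentially the same route as the paper's proof: a truncated exponential tilt, Taylor expansion of the truncated cumulant generating function controlled by Lemmas~\ref{lem:truncation1} and~\ref{lem:truncation2} to solve $\varphi_n'(t_n)=\mu+s_nn^{\gamma-1}$ and perform the Legendre cancellation, and a local CLT for the tilted sum, with the uniformity of that local CLT left as a deferred detail in both your writeup and the paper. The only minor difference is that the paper obtains the upper-bound half of the estimate directly from the Chernoff inequality $\P(\cdot)\le\exp\bigl(n\varphi_n(t_n)-(\mu n+s_nn^\gamma)t_n\bigr)$, so the local limit theorem is invoked only for the lower bound, which somewhat reduces how much uniformity must actually be justified downstream.
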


For the proof, set $m_n:= \lfloor \kappa_n n^\gamma\rfloor $ and 
define the truncated cumulant generating function 
\[
	\varphi_n(t) =\log \Bigl( \sum_{j=0}^{m_n} c \exp( - j^\alpha) \e^{tj}\Bigr)
\]
so that
\begin{equation} \label{eq:markovub}
	\P(S_n = \mu n + s_nn^\gamma, M_n \leq \kappa_n n^\gamma) 
	\leq \inf_{t\in \R} \exp\Bigl( n \varphi_n(t) - (\mu n+ s_n n^\gamma) t\Bigr). 
\end{equation} 
As $\varphi_n'$ is continuous and strictly increasing at goes to $0$ as $t\to -\infty$ and to $\infty$ as $t\to \infty$, the equation $\varphi'_n(t_n) = \mu + s_n n^{\gamma-1}$ has a unique solution $t_n\in \R$. 

\begin{lemma}\label{lem:tnsol}
	Under the conditions of Proposition~\ref{prop:normal}: The unique solution $t_n$ to $\varphi'_n(t_n) = \mu + s_n n^{\gamma-1}$ satisfies			\[
		\Bigl| \varphi_n(t_n) - (\mu + s_n n^{\gamma-1}) t_n + \frac{s_n^2}{2\sigma^2}\, n^{2\gamma-2}\Bigr|\leq C n^{3\gamma-3}
	\]
	for all $n\in \N$ and some $ C= C(\eps,\kappa,a,b)$. 
\end{lemma}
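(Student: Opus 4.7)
The plan is to Taylor-expand $\varphi_n$ around $t=0$ to third order, treating the truncated moments as essentially equal to the full moments. The first two derivatives at zero are the mean $\mu_n$ and variance $\sigma_n^2$ of the truncated law $p_n(j)\propto \e^{-j^\alpha}$ on $\{0,\ldots,m_n\}$, and Lemma~\ref{lem:truncation2} immediately shows that $\varphi_n(0)$, $\mu_n-\mu$, and $\sigma_n^2-\sigma^2$ are each $O(m_n^{k}\e^{-m_n^\alpha})$ for any $k$. Since $m_n\sim n^\gamma$, these errors are super-polynomially small in $n$ and, in particular, negligible compared to $n^{3\gamma-3}$.

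The key technical estimate is then a uniform bound on $\varphi_n'''(t)$ for $t$ in an interval $[-c_1 n^{\gamma-1}, c_1 n^{\gamma-1}]$ large enough to contain $t_n$. The third derivative is the third cumulant of the tilted truncated law with probabilities proportional to $\e^{-j^\alpha + tj}$, so it is a polynomial in the first three raw moments of that law, each of the form $\sum_{j=1}^{m_n} j^k \e^{tj-j^\alpha}$ divided by a normalization bounded below by the $j=0$ term. Lemma~\ref{lem:truncation1} therefore produces the required uniform bound as soon as $c_1$ is chosen slightly larger than the anticipated upper bound $\max(|a|,b)/\sigma^2$ on $|t_n|/n^{\gamma-1}$; the quantitative hypothesis on $\kappa$ in Proposition~\ref{prop:normal} is designed so that this choice of $c_1$ remains compatible with the cutoff coefficient $\kappa$.

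With these two ingredients, the defining equation $\varphi_n'(t_n)=\mu+s_n n^{\gamma-1}$ becomes $\mu_n+\sigma_n^2 t_n + O(t_n^2) = \mu + s_n n^{\gamma-1}$, and a short bootstrap---first the rough bound $|t_n|\leq Cn^{\gamma-1}$ from the strict monotonicity of $\varphi_n'$, then one further iteration---yields $t_n=(s_n/\sigma^2)n^{\gamma-1}+O(n^{2\gamma-2})$, so that $t_n^2=(s_n^2/\sigma^4) n^{2\gamma-2}+O(n^{3\gamma-3})$. Taylor expansion of $\varphi_n$ around $0$ together with the identity $\varphi_n'(t_n)=\mu+s_n n^{\gamma-1}$ finally gives
\[
\varphi_n(t_n)-(\mu+s_n n^{\gamma-1}) t_n=\varphi_n(0)-\tfrac12\sigma_n^2 t_n^2+O\bigl(|t_n|^3\sup|\varphi_n'''|\bigr),
\]
and substituting the previous estimates produces $-\frac{s_n^2}{2\sigma^2}n^{2\gamma-2}+O(n^{3\gamma-3})$, as claimed.

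The main obstacle I anticipate is not any single computation but the careful bookkeeping of constants: ensuring that the bound on $\varphi_n'''$ is uniform over the \emph{entire} Taylor segment from $0$ to $t_n$ (not merely at its endpoint) forces $c_1$ to strictly exceed $\max(|a|,b)/\sigma^2$, and the strict inequality in the hypothesis on $\kappa$ is precisely what provides the slack needed to apply Lemma~\ref{lem:truncation1} with such a $c_1$.
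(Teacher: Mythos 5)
Your proposal is correct and mirrors the paper's argument: Taylor-expand $\varphi_n$ around $0$, control $\varphi_n(0)$, $\varphi_n'(0)$, $\varphi_n''(0)$ up to super-polynomially small truncation errors via Lemma~\ref{lem:truncation2}, bound $\varphi_n'''$ uniformly on $[0,c_1 n^{\gamma-1}]$ via Lemma~\ref{lem:truncation1} with $c_1>b/\sigma^2$ (compatible with the hypothesis on $\kappa$), bootstrap $t_n=s_n n^{\gamma-1}/\sigma^2+O(n^{2\gamma-2})$, and substitute. The only cosmetic differences are that the paper only treats the positive half-interval $[0,\tau_n]$ since the negative side is harmless for Lemma~\ref{lem:truncation1} (so $c_1>b/\sigma^2$ suffices, not $\max(|a|,b)/\sigma^2$), and your organization of the final cancellation via $\varphi_n(t_n)-\varphi_n'(t_n)t_n=\varphi_n(0)-\tfrac12\varphi_n''(0)t_n^2+O(t_n^3)$ is slightly tidier than the paper's last display.
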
 

\begin{proof}
	Let $r_n: =\sum_{j> m_n} j^3 \exp( - j^\alpha)$.	
	By Lemma~\ref{lem:truncation2}, we know that 
	\[
		r_n \leq \sum_{j> \eps n^\gamma} j^3 \exp(-j^\alpha) = O\Bigl( \exp( - \mathrm{const} n^{\gamma\alpha})\Bigr)
	\] 	
	as $n\to \infty$. The bound is uniform on $m_n = \lfloor \kappa_n n^\gamma \rfloor \geq \eps n^\gamma$.	We note 
	\[
		\varphi'_n(0) = \frac{\sum_{j=1}^\infty j\exp( - j^\alpha)+ O(r_n)}{\sum_{j=1}^\infty \exp( -j ^\alpha) + O(r_n)} = \mu + O(r_n), \quad \varphi''_n(0) = \sigma^2 + O(r_n).
	\] 
Next let $c_1> b/\sigma^2$ with $\kappa < c_1^{-1/(1-\alpha)}$ and $\tau_n:= c_1 n^{\gamma-1}$. We have 
	\[
		\varphi'_n(\tau_n) = \varphi'_n(0) + \tau_n \varphi_n''(0) + \frac{\tau_n^2}{2} \varphi_n'''(\tau'_n)
	\]
	for some $\tau'_n \in [0,\tau_n]$. By Lemma~\ref{lem:truncation1}, the third derivative $\varphi_n'''$ is bounded on $[0,\tau_n]$, uniformly in $n$. Thus, 
	\begin{align*}
		\varphi'_n(\tau_n) & = \mu +O(r_n) + \tau_n\bigl(\sigma^2+O(r_n)\bigr) + O(\tau_n^2) \\
		& = \mu + c_1 \sigma^2 n^{\gamma-1} + O(n^{2\gamma-2}),
	\end{align*}
	hence $\varphi'_n(\tau_n)> \mu + s_n n^{\gamma-1}$ for sufficiently large $n$ and $t_n < \tau_n$.  For the asymptotic behavior of $t_n$, we note
	\[
		\mu + s_n n^{\gamma-1} = \varphi'_n(t_n) = \mu + \sigma^2 t_n + O(n^{2\gamma-2})
	\]
	which gives $t_n = s_n n^{\gamma-1} /\sigma^2 + O(n^{2\gamma-2})$. Finally, 
	\[
		\varphi_n(t_n) - (\mu + s_n n^{\gamma-1}) t_n
		 = \frac12 \sigma^2 t_n^2 + O(r_n) + O(t_n^3) = \frac{s^2}{2\sigma^2} n^{2\gamma-2} + O( n^{3\gamma-3}).
	\] 
	The bounds depend only on the remainder term $r_n$ and on bounds on derivatives; they are uniform for $s_n\in [a,b]$, $\kappa_n \in [\eps,\kappa]$. 
\end{proof}

\begin{proof} [Proof of Proposition~\ref{prop:normal}] 
	The inequality~\eqref{eq:markovub} and Lemma~\ref{lem:tnsol} yield right away the upper bound
	\[
		\P(S_n = \mu n + s_n n^{\gamma}, M_n\leq \kappa_n n^{\gamma}) 
		\leq \exp\Bigl( - \frac{s_n^2}{2 \sigma^2} n^{2\gamma-1} + O(n^{3\gamma-2})\Bigr). 
	\] 	
	For the lower bound, let $\widehat X^{(n)}_i$, $i\in \N$, be i.i.d.\ random variables with law 
	\[
		\P(\widehat X_i^{(n)} = k) = \1_{\{ k \leq \kappa n^\gamma\}} \frac{ \exp(t_n k - k^\alpha)}{\sum_{j\leq \kappa_n n^\gamma} \exp( t_n j- j^\alpha)} \quad (k\in \N_0)
	\]
	and let $\widehat S_n =\widehat X^{(n)}_1+\cdots+ \widehat X^{(n)}_n$. Then 
	\begin{multline*}
		\P(S_n = \mu n + s_n n^{\gamma}, M_n\leq \kappa_n n^{\gamma})\\ 
			= \exp\Bigl( n\varphi_n(t_n) - (\mu + s_n n ^\gamma) t_n\Bigr)
			\P( \widehat S_n = \mu n + s_n n^\gamma).	
	\end{multline*} 
	The asymptotic behavior of the exponential is given by Lemma~\ref{lem:tnsol}. The variables $\widehat X^{(n)}_i$ have expected value $\mu + s_n n^{\gamma-1}$, variance $\sigma^2+ O(n^{\gamma-1})$, and their third moments are bounded, uniformly in $n$. The local central limit theorem for integer-valued random variables \cite[Chapter 4]{ibragimov-linnik-book} yields 
	\[
		\P\Bigl( \widehat S_n = \mu n + s_n n^\gamma\bigr) 
		= \frac{1}{\sqrt{2\pi n (\sigma^2 +O(n^{\gamma-1}))}} + O( n^{-3/2})
 = \exp\Bigl( O(\log n)\Bigr). 
	\]
	Carefully retracing the proof of the local limit theorem, one finds that the normal approximation is uniform in the range considered here; we skip the details as we shall actually only need the uniformity for the upper bound. 
\end{proof}

\subsection{Lower bound}


As a preparation for the proof of Theorem~\ref{thm:maxldp}, we prove the corresponding lower bound for non-normalized measures $A\mapsto \P(A\cap \{S_n = \mu n + s_n n^\gamma\})$. 

\begin{proposition} \label{prop:lb}
	Let $s\in \R$ and let $(s_n)_{n\in \N}$ be a sequence with $s_n\to s$ and $\mu n + s_n n^\gamma \in \N_0$ for all $n$. Then, for every open set $\mathcal O\subset \mathcal D$, 
	\[
		\liminf_{n\to \infty} \frac{1}{n^{\gamma\alpha}} \log \P\Bigl( \Bigl( \frac{X_{[j:n]}}{n^\gamma} \Bigr)_{j\in \N} \in \mathcal O,\ S_n = \mu n + s_n n^\gamma \Bigr) \geq - \inf_{\mathcal O}F_s.
	\] 
\end{proposition}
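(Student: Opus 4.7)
The plan is to exhibit, for every $\vect x=(x_1,x_2,\ldots)\in\mathcal O$ with $F_s(\vect x)<\infty$, an explicit event $E_n$ that forces $(X_{[j:n]}/n^\gamma)_{j\in\N}$ into $\mathcal O$ and whose probability I can estimate sharply via Proposition~\ref{prop:normal}. The construction prescribes the values of $k$ distinguished variables at the target large levels and caps the remaining $n-k$ variables, so that the leftover sum constraint falls into the moderate-deviation regime that Proposition~\ref{prop:normal} handles. If $\inf_{\mathcal O}F_s=\infty$ there is nothing to prove, so assume it is finite. Pick $\vect x\in\mathcal O$ with $F_s(\vect x)$ close to that infimum, and reduce to finite support: the truncations $\vect x^{(k)}=(x_1,\ldots,x_k,0,0,\ldots)$ converge to $\vect x$ in the product topology, so $\vect x^{(k)}\in\mathcal O$ eventually, and $F_s(\vect x^{(k)})\to F_s(\vect x)$ since $\sum_{j\leq k} x_j^\alpha\uparrow \sum_j x_j^\alpha$ while $\sum_{j>k}x_j\leq x_k^{1-\alpha}\sum_j x_j^\alpha\to 0$ (the same estimate as in Lemma~\ref{lem:lsc}). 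A small further perturbation lets me assume $x_1>x_2>\cdots>x_k>0$.

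Choose $\eta>0$ small enough that (i) $4\eta<\min(x_k, x_1-x_2,\ldots,x_{k-1}-x_k)$, (ii) the product-topology neighborhood $\mathcal N=\{\vect y\in \mathcal D: |y_j-x_j|<\eta\text{ for }j\leq k,\ y_{k+1}<\eta\}$ lies in $\mathcal O$, and (iii) $2\eta$ lies below the $\kappa$-threshold of Proposition~\ref{prop:normal} for a compact interval $[a,b]\ni 0$ containing $s-\sum_i x_i$ in its interior. Setting $n_i^{(n)}:=\lfloor x_i n^\gamma\rfloor-i+1$ guarantees $n_1^{(n)}>\cdots>n_k^{(n)}$, and I consider
\[
E_n:=\bigl\{X_i=n_i^{(n)}\ (i=1,\ldots,k);\ \max_{i>k} X_i\leq \eta n^\gamma;\ S_n=\mu n+s_n n^\gamma\bigr\}.
\]
Since $n_k^{(n)}/n^\gamma\to x_k>2\eta$, on $E_n$ the $k$ largest of $X_1,\ldots,X_n$ are exactly $X_1,\ldots,X_k$ in that order, so $(X_{[j:n]}/n^\gamma)_{j\in\N}\in\mathcal N\subset\mathcal O$ for large $n$. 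By independence $\P(E_n)$ factorises into $\prod_{i=1}^k c\,\e^{-(n_i^{(n)})^\alpha}$ times $\P(\widetilde M_{n-k}\leq\eta n^\gamma,\widetilde S_{n-k}=M_n)$, where $\widetilde M,\widetilde S$ are max and sum of the last $n-k$ variables and $M_n=\mu(n-k)+s'_{n-k}(n-k)^\gamma$ with $s'_{n-k}\to s-\sum_i x_i$. Applying Proposition~\ref{prop:normal} with $\kappa_{n-k}=\eta(n/(n-k))^\gamma\to\eta$, and using $2\gamma-1=\gamma\alpha$, $3\gamma-2<\gamma\alpha$ (as $\gamma<1$), together with $(n_i^{(n)})^\alpha=x_i^\alpha n^{\gamma\alpha}+o(n^{\gamma\alpha})$, division by $n^{\gamma\alpha}$ yields
\[
\liminf_{n\to\infty}\frac{1}{n^{\gamma\alpha}}\log\P(E_n)\geq -\sum_i x_i^\alpha-\frac{1}{2\sigma^2}\Bigl(s-\sum_i x_i\Bigr)^2=-F_s(\vect x^{(k)}).
\]
Since $E_n$ is contained in the event on the left-hand side of the proposition, sending $k\to\infty$ (and the perturbation to zero) concludes the argument.

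The principal technical obstacle is verifying that the hypotheses of Proposition~\ref{prop:normal} hold \emph{uniformly} after the rescaling $n\mapsto n-k$: the auxiliary parameters $s'_{n-k}$ and $\kappa_{n-k}$ converge to non-degenerate limits, so they lie in fixed compact intervals $[a,b]$ and $[\eps',\kappa']$ with $a<0<b$ and $\kappa'<(\sigma^2/b)^{-1/(1-\alpha)}$ once $\eta$ has been chosen small. A secondary nuisance is enforcing the strict monotonicity $n_1^{(n)}>\cdots>n_k^{(n)}$; the off-by-one shifts above achieve this at a cost of only $O(1)$ per coordinate, which is absorbed in the $o(n^{\gamma\alpha})$ error.
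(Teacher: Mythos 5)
Your proof is correct and follows essentially the same strategy as the paper: fix finitely many order statistics at the prescribed macroscopic levels, cap the remaining variables, and invoke Proposition~\ref{prop:normal} for the leftover moderate-deviation event, then pass to the limit in the truncation parameter. The differences are cosmetic (the paper works with an exact minimizer of $F_s$ on $\mathcal O$ and shifts up by $\delta$ before flooring, whereas you use a finitely supported near-minimizer and integer shifts $-i+1$ to enforce strict ordering).
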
 

\begin{proof} 
 The only interesting case is when $\inf_\mathcal O F_s <\infty$. As $F_s$ is lower semi-continuous and has compact level sets, the rate function has a minimizer on $\mathcal O$; let $\vect x = (x_j)_{j\in \N}\in \mathcal O$ be such that $F_s(\vect x) = \inf_\mathcal O F_s<\infty$.

The product topology is metrizable with metric 
\begin{equation} \label{eq:metric}
	d(\vect x, \vect y) = \sum_{j=1}^\infty \frac{1}{2^j} \frac{|x_j-y_j|}{1+|x_j-y_j|}.
\end{equation} 
As $\mathcal O$ is open, there exists $\eps>0$ such that the open ball with radius $\eps$ centered at $\vect x$ is contained in $\mathcal O$. 
A sufficient condition for $d(\vect x,\vect y) \leq \eps$ is 
\[
	\sum_{j=1}^\ell |y_j-x_j| \leq \frac{\eps}{2},\quad \frac{1}{2^\ell} \leq \frac \eps 2. 
\] 
Let $\ell\in \N$ large enough so that $2^\ell>2/\eps$ and pick $\delta < \eps /\ell$. Thens, for sufficiently large $n$, 
\begin{align*} 
	&\P\Bigl( \Bigl( \frac{X_{[j:n]}}{n^\gamma}\Bigr)_{j\in \N}\in \mathcal O, S_n = \mu n + s_nn^\gamma \Bigr) \\
	&\quad \geq \P\Bigl( \forall j\leq \ell:\, X_j = \lfloor (x_j+\delta) n^\gamma\rfloor, X_{[\ell+1:n]}\leq \lfloor (x_\ell +\delta) \rfloor,\, S_n = \mu n + s_n n^\gamma\Bigr)\\
	&\quad = \P\Bigl( \forall j\leq \ell:\, X_j = \lfloor (x_j+\delta) n^\gamma\rfloor\Bigr)\\
	&\qquad \qquad \times \P\Bigl(S_{n-\ell} = \mu n + (s_n- \sum_{j=1}^\ell \lfloor (x_j+\delta) \rfloor n^\gamma), M_{n-\ell} \leq \lfloor (x_\ell+\delta) n^\gamma\rfloor \Bigr).
\end{align*} 
The last probability is evaluated with Proposition~\ref{prop:normal}. We take the logarithm, divide by $n$, and let first $n\to \infty$ at fixed $\delta$ and then $\delta\downarrow 0$. This gives 
\begin{multline*}
	\liminf_{n\to \infty}\frac{1}{n}\log \P\Bigl( \Bigl( \frac{X_{[j:n]}}{n^\gamma}\Bigr)_{j\in \N}\in \mathcal O, S_n = \mu n + s_n n^\gamma \Bigr)\\
	\geq - \sum_{j=1}^\ell x_j^\alpha - \frac{1}{2\sigma^2}\Bigl( s - \sum_{j=1}^\ell x_j\Bigr)^2. 	
\end{multline*} 
To conclude, let $\ell\to \infty$. The right side above becomes $- F_s(\vect x)$, which is $- \inf_\mathcal O F_s$ by the choice of $\vect x$. 
\end{proof} 

\subsection{Upper bound} \label{sec:ub}

Next we turn to the upper bound for the non-normalized measures.

\begin{proposition} \label{prop:ub}
	Let $s\in \R$ and let $(s_n)_{n\in \N}$ be a sequence with $s_n\to s$ and $\mu n + s_n n^\gamma \in \N_0$ for all $n$. Then, for every closed set $\mathcal A\subset \mathcal D$, 
	\[
		\limsup_{n\to \infty} \frac{1}{n^{\gamma\alpha}} \log  \P\Bigl( \Bigl( \frac{X_{[j:n]}}{n^\gamma} \Bigr)_{j\in \N} \in \mathcal A,\ S_n = \mu n + s_n n^\gamma \Bigr) \leq - \inf_{\mathcal A}F_s. 
	\] 
\end{proposition}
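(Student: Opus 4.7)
The plan is to mirror the strategy of Proposition~\ref{prop:lb}, but now using Markov-type upper bounds in place of the direct lower bound. The proof has three main steps: (i) establish exponential tightness to reduce $\mathcal A$ to a compact piece; (ii) prove a local upper bound on small cylinder neighborhoods using Proposition~\ref{prop:normal}; (iii) patch the local bounds together by compactness.

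For exponential tightness, I would fix $L > 0$ and show that for $M = M(L)$ large enough,
$$
\P\bigl(X_{[1:n]} \geq M n^\gamma,\ S_n = \mu n + s_n n^\gamma\bigr) \leq \exp(-L n^{\gamma \alpha})
$$
for all sufficiently large $n$. This follows from a union bound over which index $i$ realizes the maximum, Lemma~\ref{lem:truncation2} to control $\P(X_1 \geq M n^\gamma)$, and a universal $O(n^{-1/2})$ bound on the point probabilities of $S_{n-1}$ from the local CLT. The set $\mathcal K_M = \{\vect y \in \mathcal D: y_1 \leq M\}$ is compact in the product topology, so it is enough to estimate the probability of $\{(X_{[j:n]}/n^\gamma)_j \in \mathcal A \cap \mathcal K_M\}$ intersected with the sum constraint.

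The heart of the proof is the following local bound: for $\vect x \in \mathcal K_M$ and $\eta > 0$, I want an open neighborhood $U$ of $\vect x$ with
$$
\limsup_{n \to \infty} \frac{1}{n^{\gamma \alpha}} \log \P\bigl((X_{[j:n]}/n^\gamma)_{j\in\N} \in U,\ S_n = \mu n + s_n n^\gamma\bigr) \leq - F_s(\vect x) + \eta.
$$
Fix $\ell \in \N$ and $\delta > 0$, and take $U = \{\vect y \in \mathcal D: |y_j - x_j|<\delta \text{ for } j \leq \ell\}$. Decomposing over the choice of the $\ell$ indices realizing the top $\ell$ order statistics and their values $k_j \in ((x_j-\delta)n^\gamma, (x_j+\delta)n^\gamma)$, and writing $N = \mu n + s_n n^\gamma$,
$$
\P(\,\cdot\,) \leq \binom{n}{\ell} \sum_{\vect k} \prod_{j=1}^\ell \P(X_1 = k_j) \cdot \P\bigl(S_{n-\ell} = N - \textstyle\sum_j k_j,\ M_{n-\ell} \leq k_\ell\bigr).
$$
The binomial and the cardinality of the admissible $\vect k$-set are polynomial in $n$; the product gives $\prod_j \P(X_1 = k_j) \leq \exp(-n^{\gamma\alpha} \sum_{j=1}^\ell (x_j - \delta)_+^\alpha)$; and the final factor is bounded by Proposition~\ref{prop:normal} applied to the $n - \ell$ remaining variables, with shifted parameter $s_n' \to s - \sum_{j \leq \ell} x_j$ and truncation threshold $\kappa_n = k_\ell/(n-\ell)^\gamma \to x_\ell$. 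Letting $n \to \infty$ then $\delta \to 0$ yields a rate of at least $F_s^{(\ell)}(\vect x) := \sum_{j=1}^\ell x_j^\alpha + (s - \sum_{j \leq \ell} x_j)^2/(2\sigma^2)$. Since $F_s^{(\ell)}(\vect x) \to F_s(\vect x)$ as $\ell \to \infty$ when $F_s(\vect x) < \infty$ (use that $x_j \to 0$ and $\sum_j x_j < \infty$ then, so dominated convergence applies to both terms), I pick $\ell$ large enough to get within $\eta$ of $F_s(\vect x)$.

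To conclude, I cover the compact set $\mathcal A \cap \mathcal K_M$ by finitely many neighborhoods $U(\vect x^{(1)}), \ldots, U(\vect x^{(r)})$ of this form, sum the local bounds, and combine with the exponential tightness estimate (choosing $L > \inf_\mathcal A F_s + 1$). The main obstacle I anticipate is the interface with Proposition~\ref{prop:normal}, which requires the truncation level $\kappa_n$ to lie in a compact subinterval of $(0, (\sigma^2/b)^{-1/(1-\alpha)})$: this forces $\ell$ to be chosen so that $x_\ell$ is positive, bounded away from the upper threshold, and yet small enough that $F_s^{(\ell)}(\vect x)$ approximates $F_s(\vect x)$ within $\eta$. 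For finitely supported $\vect x$ the truncation at $\ell$ is exact and no limit is needed, while the case $F_s(\vect x) = \infty$ is handled separately by a direct union-bound argument exploiting that $\sum_{j \leq \ell} x_j^\alpha$ can be made arbitrarily large at finite $\ell$; so the genuine difficulty is the delicate tail in the intermediate regime of infinitely supported $\vect x$ with finite energy.
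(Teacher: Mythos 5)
Your approach---exponential tightness to reduce to a compact piece $\mathcal K_M$, local cylinder bounds via Proposition~\ref{prop:normal}, and a finite subcover---is a genuinely different route from the paper's. The paper never covers $\mathcal A$ by neighborhoods. Instead, it chooses a \emph{single} small truncation level $\kappa$ and a single bound $m$ once and for all, defines $\mathcal A_{m,\ell}=\mathcal A\cap\{x_1\le m\}\cap\{x_{\ell+1}\le \kappa\}$, splits off the two ``bad'' events $\{M_n>mn^\gamma\}$ and $\{X_{[\ell:n]}>\kappa n^\gamma\}$ (each of which has a large explicit rate: $m^\alpha$ resp.\ $\ell\kappa^\alpha$), projects $\mathcal A_{m,\ell}$ onto its first $\ell$ coordinates to get a finite-dimensional infimum $I_{m,\ell}$, and then proves $\liminf_\ell I_{m,\ell}\ge\inf_{\mathcal A}F_s$ using lower semicontinuity and compactness of $[0,m]^\N$ (Lemma~\ref{lem:infconv}). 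Your covering strategy avoids the need for that auxiliary lemma, which is a real structural difference; it is the textbook exponential tightness + weak upper bound route.

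The place where your argument is actually in trouble is exactly the one you flag, but your diagnosis of where it bites is off. You set $\kappa_n=k_\ell/(n-\ell)^\gamma\to x_\ell$. This causes two distinct problems. First, Proposition~\ref{prop:normal} requires $\kappa_n\ge\eps>0$ uniformly, but $k_\ell$ can be as small as $1$ (and over a neighborhood of a finitely supported $\vect x$, $k_\ell$ ranges down to $1$ for $\ell$ past the support), so $\kappa_n\to 0$. Second, and more seriously, the proposition also requires $\kappa$ to sit below the threshold $(\sigma^2/b)^{1/(1-\alpha)}$ determined by the upper bound $b$ on the residual parameter $s-\sum_{j\le\ell}x_j$; if $x_\ell$ is \emph{large} this fails. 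This is not a phenomenon tied to infinitely supported $\vect x$ as you suggest: take $\vect x=(y,0,0,\ldots)$ with $y$ moderately large and $s>y$---then $\ell=1$ is ``exact'' in your terminology, yet $x_1=y$ can easily exceed the admissible truncation level for residual parameter $s-y$. The fix is to decouple the truncation level from the order statistic entirely: fix a single small $\kappa<(\sigma^2/s)^{1/(1-\alpha)}$ once and for all (as the paper does), then choose $\ell$ large enough that $x_\ell+\delta<\kappa$ (possible since $x_j\to0$ whenever $F_s(\vect x)<\infty$), and feed $\kappa_n\equiv\kappa$ into Proposition~\ref{prop:normal}; the event $\{M_{n-\ell}\le k_\ell\}$ is contained in $\{M_{n-\ell}\le\kappa n^\gamma\}$, and only the upper-bound half of Proposition~\ref{prop:normal} is needed. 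With this correction, the rest of your plan---approximation $F_s^{(\ell)}(\vect x)\to F_s(\vect x)$, the separate treatment of $F_s(\vect x)=\infty$ using divergence of $\sum_j x_j^\alpha$ (correct: for decreasing nonnegative sequences, $\sum x_j^\alpha<\infty$ forces $\sum x_j<\infty$), and the finite subcover---goes through and yields the claimed bound.
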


For the proof, fix $\kappa < (\sigma^2/s)^{-1/(1-\alpha)}$ if $s>0$ and $\kappa >0$ arbitrary if $s=0$.  For $m>0$ and $\ell\in \N$, further let
\[
	\mathcal A_{m,\ell} = \mathcal A\cap \{\vect x:\, x_1\leq m\}\cap \{\vect x:\, x_{\ell+1}\leq \kappa\bigr\}. 
\] 
We split
\begin{multline} \label{eq:A3}
	\P\Bigl( \Bigl( \frac 1{n^\gamma} X_{[j:n]}\Bigr)_{j\in \N}\in \mathcal A, S_n = \mu n + s_n n^\gamma \Bigr) \\ 
	\leq \P\Bigl( \Bigl( \frac 1{n^\gamma} X_{[j:n]}\Bigr)_{j\in \N}\in \mathcal A_{m,\ell}, S_n =  \mu n + s_n n^\gamma  \Bigr)\\ 
		+ \P\bigl( M_n > m n^\gamma\bigr) + \P\bigl( X_{[1:n]}\geq \cdots \geq X_{[\ell:n]} > \kappa n^\gamma \bigr).
\end{multline} 
The last term is easily bounded: we have 
\begin{align*}
	 \P\Bigl(\forall j\leq \ell:\, X_{[j:n]} > \kappa n^\gamma \Bigr)
	& \leq \binom{n}{\ell} \Bigl( \sum_{j >n^\gamma \kappa} c\exp( -j^\alpha) \Bigr)^\ell \\
	& \leq \binom{n}{\ell} \Bigl( \frac 1\alpha (1+o(1)) (n^\gamma \kappa)^{1-\alpha} \exp( - n^{\gamma\alpha} \kappa^\alpha)\Bigr)^\ell
\end{align*}
(see Lemma~\ref{lem:truncation2}), hence 
\begin{equation}\label{eq:ub1}
	\limsup_{n\to \infty} \frac 1{n^{\gamma\alpha}} \log \P\Bigl(\forall j\leq \ell:\, X_{[j:n]} > \kappa n^\gamma\Bigr) 
	\leq- \ell \kappa ^\alpha. 
\end{equation}
Similarly, 
\begin{equation} \label{eq:ub2}
	\limsup_{n\to \infty} \frac{1}{n^{\gamma \alpha}} \log \P(M_n > m n^\gamma) \leq - m^\alpha. 
\end{equation} 
Turning to the first term on the right side of~\eqref{eq:A3}:

\begin{lemma}  \label{lem:ubmain}
	Let $\mathcal A_{m,\ell}^{(\ell)} \subset \R_+^m$ be the image of $\mathcal A_{m,\ell}$ under the projection $(x_j)_{j\in \N} \mapsto (x_1,\ldots,x_\ell)$. Let 
	\[
		I_{m,\ell} = \inf_{(x_1,\ldots, x_\ell) \in \mathcal A_{m,\ell}^{(\ell)}} \Bigl( \sum_{j=1}^\ell x_j^\alpha + \frac{1}{2\sigma^2} \Bigl( s - \sum_{j=1}^\ell x_j\Bigr)^2 \Bigr). 
	\]
	Then for every $m>0$ and $\ell \in \N$, 
	\[
		\limsup_{n\to \infty} \frac{1}{n^{\gamma\alpha}} \log \P\Bigl( \Bigl( \frac 1{n^\gamma} X_{[j:n]}\Bigr)_{j\in \N}\in \mathcal A_{m,\ell}, S_n =  \mu n + s_n n^\gamma  \Bigr) \leq - I_{m,\ell}.
	\] 
\end{lemma}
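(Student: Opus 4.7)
The plan is to exploit the factorisation between the top $\ell$ order statistics and the remaining $n-\ell$ variables. On the event $(X_{[j:n]}/n^\gamma)_{j\in\N}\in \mathcal A_{m,\ell}$, the $(\ell{+}1)$-st order statistic is at most $\kappa n^\gamma$, hence the $n-\ell$ variables outside the top $\ell$ have maximum at most $\kappa n^\gamma$, which brings Proposition~\ref{prop:normal} into play with constant threshold $\kappa_n=\kappa$. Fixing $\delta>0$, I would cover the bounded set $\mathcal A_{m,\ell}^{(\ell)}\subset [0,m]^\ell$ by finitely many balls $B(\vect y^{(i)},\delta)$, $i=1,\ldots,N_\delta$, with centres $\vect y^{(i)}\in \mathcal A_{m,\ell}^{(\ell)}$, and bound the target probability by $\sum_i \P(B_\delta(\vect y^{(i)}))$, writing $B_\delta(\vect y)$ for the event that $|X_{[j:n]}/n^\gamma - y_j|\leq \delta$ for $j\leq \ell$, $X_{[\ell+1:n]}\leq \kappa n^\gamma$, and $S_n=\mu n+s_n n^\gamma$.

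For a single centre $\vect y=(y_1,\ldots,y_\ell)$, a union bound over the identity of the top-$\ell$ positions combined with independence gives
\[
\P(B_\delta(\vect y)) \leq n^\ell \sum_{\vect k} \Bigl(\prod_{j=1}^\ell c\,\exp(-k_j^\alpha)\Bigr)\, \P\Bigl(S_{n-\ell}= \mu n+ s_n n^\gamma-\textstyle\sum_j k_j,\ M_{n-\ell}\leq \kappa n^\gamma\Bigr),
\]
where $\vect k=(k_1,\ldots,k_\ell)$ ranges over integer tuples with $k_j\in[(y_j-\delta) n^\gamma,(y_j+\delta) n^\gamma]$ and $n^\ell$ absorbs the combinatorial factor $\binom{n}{\ell}\ell!$. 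Bounding each $\exp(-k_j^\alpha)\leq \exp(-(y_j-\delta)_+^\alpha n^{\gamma\alpha})$ and applying Proposition~\ref{prop:normal} with residual density $s'_n:=(\mu \ell+s_n n^\gamma-\sum_j k_j)/(n-\ell)^\gamma$, which lies within $O(\delta)$ of $s-\sum_j y_j$, yields
\[
\P(S_{n-\ell}=\cdots,\, M_{n-\ell}\leq \kappa n^\gamma)\leq \exp\Bigl(-\tfrac{(s'_n)^2}{2\sigma^2}(n-\ell)^{2\gamma-1}+O(n^{3\gamma-2})\Bigr).
\]
Since $2\gamma-1=\gamma\alpha$ and the combinatorial and lattice-counting prefactors are polynomial in $n$, hence sub-exponential in $n^{\gamma\alpha}$, taking logarithms produces
\[
\tfrac{1}{n^{\gamma\alpha}}\log \P(B_\delta(\vect y))\leq -\sum_{j=1}^\ell (y_j-\delta)_+^\alpha-\tfrac{(s-\sum_j y_j)^2}{2\sigma^2}+o(1)+O(\delta).
\]

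Summing the $N_\delta$ ball contributions gives only a further polynomial prefactor, and since $\vect y^{(i)}\in \mathcal A_{m,\ell}^{(\ell)}$ the bracket is at least $I_{m,\ell}$, up to an error $\eta(\delta)\to 0$ as $\delta\downarrow 0$ controlled by the uniform continuity of $x\mapsto x^\alpha$ and of the quadratic term on $[0,m]$. Letting $\delta\downarrow 0$ concludes the proof. The main obstacle I anticipate is the uniformity of Proposition~\ref{prop:normal} across the cover: the residual density $s'_n$ depends on $\vect y$ and $\vect k$, so I would check that all such values lie in a single compact interval $[a,b]$ satisfying $\kappa<(\sigma^2/b)^{-1/(1-\alpha)}$. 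This is ensured because $\sum_j y_j\in[0,\ell m]$ and $s_n\to s$, and the strict inequality $\kappa<(\sigma^2/s)^{-1/(1-\alpha)}$ fixed at the definition of $\mathcal A_{m,\ell}$ admits a small enlargement of $b$ above $s$.
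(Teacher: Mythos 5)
Your proposal is correct and follows essentially the same route as the paper: union bound over which $\ell$ indices carry the top order statistics, factor out the residual sum, apply Proposition~\ref{prop:normal} uniformly over the rescaled partial sums, and absorb the polynomial combinatorial and lattice-counting factors since they are $\e^{o(n^{\gamma\alpha})}$. The only difference is that you introduce a $\delta$-net cover of $\mathcal A_{m,\ell}^{(\ell)}$ before summing over integer values, whereas the paper avoids this: it sums directly over the admissible integer tuples $m_1^{(n)}\geq\cdots\geq m_\ell^{(n)}$, observes that each rescaled tuple $(n^{-\gamma}m_j^{(n)})_{j\leq\ell}$ already lies in $\mathcal A_{m,\ell}^{(\ell)}$, and hence bounds the exponent by $-I_{m,\ell}$ at once rather than via a continuity argument as $\delta\downarrow 0$. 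Your extra layer works but is not needed; the uniformity checks you flag at the end (residual density confined to a fixed interval $[a,b]$ with $\kappa<(\sigma^2/b)^{-1/(1-\alpha)}$ by enlarging $b$ slightly above $s$) are exactly the ones the paper relies on implicitly.
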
 

\begin{proof}
Clearly
\begin{multline*}
	\P\Bigl( \Bigl( \frac{X_{[j:n]}}{n^\gamma}\Bigr)_{j\in \N}\in \mathcal A_{m,\ell}, S_n = \mu n + s_n n^\gamma \Bigr)\\
	 \leq \binom{n}{\ell} \sum_{m_1^{(n)},\ldots, m_\ell^{(n)}} \P(\forall j\leq \ell:\, X_j=m_j^{(n)}) \\
	 	 \times \P\Bigl(M_{n-\ell} \leq \kappa n^\gamma,\, S_{n-\ell} =\mu n+ s_n n^\gamma - \sum_{j=1}^\ell m_j^{(n)}\Bigr).
\end{multline*} 
The sum is over finite decreasing sequences $m_1^{(n)}\geq \cdots \geq m_\ell^{(n)}$ with $m\geq m_1^{(n)}$ for which there exist $y_{\ell+1},y_{\ell+2},\ldots$ such that 
\[
	\bigl( n^{-\gamma}m_1^{(n)}, \ldots,  n^{-\gamma}m_\ell^{(n)},y_1,y_2,\ldots\bigr) \in \mathcal A_{m,\ell}
\] 
or equivalently, $(n^{-\gamma} m_1^{(n)}, \ldots,  n^{-\gamma}m_\ell^{(n)}) \in \mathcal A_{m,\ell}^{(\ell)}$. 
By Proposition~\ref{prop:normal}, as $n\to \infty$ at fixed $\ell$,
\begin{multline*}
	\P\Bigl(M_{n-\ell} \leq \kappa n^\gamma,\, S_{n-\ell} =\mu n+ s_n n^\gamma - \sum_{j=1}^\ell m_j^{(n)}\Bigr) \\
	\leq \exp\Bigl( - \frac 1{2\sigma^2}\Bigl(s_n- \sum_{j=1}^\ell m_j^{(n)}\Bigr)^2 n^{\gamma\alpha}  + o(n^{\gamma\alpha})\Biggr)
\end{multline*} 
uniformly in $m_1^{(n)}+\cdots + m_\ell^{(n)} \leq \mathrm{const} n^\gamma$. It follows that 
\begin{multline*} 
	\P\Bigl( \Bigl( \frac{X_{[j:n]}}{n^\gamma}\Bigr)_{j\in \N}\in \mathcal A_{m,\ell}, S_n = \mu n + s_n n^\gamma \Bigr)\\
	\leq \e^{o(n^{\gamma\alpha})} \max_{(m_1^{(n)},\ldots, m_\ell^{(n)})} \exp\Biggl( - \sum_{j=1}^\ell m_j^\alpha  - \frac 1{2\sigma^2}\Bigl(s_n n^\gamma - \sum_{j=1}^\ell m_j^{(n)}\Bigr)^2\Biggr)
\end{multline*}
and then 
\begin{multline*}
	\limsup_{n\to \infty} \frac{1}{n^{\gamma\alpha}} \log 	\P\Bigl( \Bigl( \frac{X_{[j:n]}}{n^\gamma}\Bigr)_{j\in \N}\in \mathcal A_{m,\ell}, S_n = \mu n + s_n n^\gamma \Bigr)\\
	\leq - \inf_{(x_1,\ldots, x_\ell) \in \mathcal A^{(\ell)}_{m,\ell} } \Bigl( \sum_{j=1}^\ell x_j^\alpha + \frac{1}{2\sigma^2} \Bigl(s - \sum_{j=1}^\ell x_j\Bigr)^2 \Bigr) =  - I_{m,\ell}.
\end{multline*} 	
\end{proof} 

\begin{lemma} \label{lem:infconv}
	$\liminf_{\ell \to \infty} I_{m,\ell} \geq \inf_\mathcal A F_s$. 
\end{lemma}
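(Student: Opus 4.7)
The plan is to extract a convergent subsequence of near-minimizers for $I_{m,\ell}$ and transfer the information to the infimum of $F_s$ over $\mathcal A$, very much in the spirit of the proof of Lemma~\ref{lem:lsc}. I may assume $\liminf_\ell I_{m,\ell}<\infty$ (otherwise the claim is trivial); pick a subsequence $\ell_k\to \infty$ with $I_{m,\ell_k}\to \liminf_\ell I_{m,\ell}$, and for each $k$ choose $(x_1^{(k)},\ldots,x_{\ell_k}^{(k)})\in \mathcal A_{m,\ell_k}^{(\ell_k)}$ that attains the variational problem defining $I_{m,\ell_k}$ to within $1/\ell_k$, together with a full extension $\vect x^{(k)}\in \mathcal A_{m,\ell_k}\subset \mathcal A$. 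Because $\vect x^{(k)}\in \mathcal D \cap [0,m]^\N$, which is compact in the product topology, a diagonal argument yields a sub-subsequence (still indexed by $k$) and $\vect x^\ast\in \mathcal D\cap [0,m]^\N$ with $x_j^{(k)}\to x_j^\ast$ for every $j$. Closedness of $\mathcal A$ then forces $\vect x^\ast\in \mathcal A$, so $F_s(\vect x^\ast)\geq \inf_\mathcal A F_s$.

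It remains to show $\liminf_k F_s^{(\ell_k)}(\vect x^{(k)})\geq F_s(\vect x^\ast)$, where $F_s^{(\ell)}(\vect x):=\sum_{j\leq \ell} x_j^\alpha + (2\sigma^2)^{-1}(s-\sum_{j\leq \ell} x_j)^2$ is the truncated functional appearing in $I_{m,\ell}$. For the power sum, Fatou's lemma for counting measure (with the observation that $\ell_k\geq j$ eventually for each fixed $j$) yields
\[
  \sum_{j=1}^\infty (x_j^\ast)^\alpha \leq \liminf_{k\to\infty} \sum_{j=1}^{\ell_k}(x_j^{(k)})^\alpha.
\]
The main obstacle is the more delicate claim that the partial sums $t_k:=\sum_{j\leq \ell_k} x_j^{(k)}$ converge to $T^\ast:=\sum_j x_j^\ast$, since both the sequence and the range of summation vary with $k$.

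I would resolve this using the tail estimate from Lemma~\ref{lem:lsc}. Let $C$ be a uniform bound for $\sum_{j\leq \ell_k}(x_j^{(k)})^\alpha$; such a bound exists because $F_s^{(\ell_k)}(\vect x^{(k)})\leq I_{m,\ell_k}+1/\ell_k$ and $I_{m,\ell_k}$ is bounded. Since each $\vect x^{(k)}$ is decreasing, for any fixed $J$ and $k$ with $\ell_k\geq J$,
\[
  \Bigl| t_k - \sum_{j=1}^{J} x_j^{(k)}\Bigr|= \sum_{j=J+1}^{\ell_k} x_j^{(k)} \leq (x_J^{(k)})^{1-\alpha} \sum_{j=J+1}^{\ell_k}(x_j^{(k)})^\alpha \leq C (x_J^{(k)})^{1-\alpha}.
\]
Letting $k\to\infty$ at fixed $J$ gives $\limsup_k\bigl|t_k-\sum_{j\leq J} x_j^\ast\bigr|\leq C (x_J^\ast)^{1-\alpha}$; then $J\to\infty$ (using $x_J^\ast\to 0$, which follows from $\sum_j (x_j^\ast)^\alpha \leq C<\infty$) shows $t_k\to T^\ast$, so $(s-t_k)^2\to (s-T^\ast)^2$. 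Combining with the Fatou bound gives $\liminf_k F_s^{(\ell_k)}(\vect x^{(k)})\geq F_s(\vect x^\ast)$, whence
\[
  \liminf_{\ell\to\infty} I_{m,\ell} = \lim_{k\to\infty} I_{m,\ell_k} \geq \liminf_{k\to\infty} \bigl( F_s^{(\ell_k)}(\vect x^{(k)}) - 1/\ell_k\bigr) \geq F_s(\vect x^\ast)\geq \inf_\mathcal A F_s,
\]
as required.
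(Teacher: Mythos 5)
Your proof is correct and follows essentially the same approach as the paper: extract near-minimizers, use compactness of $[0,m]^\N$ and closedness of $\mathcal A$ to obtain a limit $\vect x^\ast\in\mathcal A$, then establish lower semicontinuity of the truncated functionals against $F_s(\vect x^\ast)$. The only difference is one of packaging: the paper observes that the truncated functional $F_s^{(\ell)}(x_1,\ldots,x_\ell)$ equals $F_s(x_1,\ldots,x_\ell,0,0,\ldots)$ and so applies Lemma~\ref{lem:lsc} black-box to the zero-padded sequences, whereas you re-derive the Fatou estimate and the partial-sum convergence directly for the truncated sums.
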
 

\begin{proof}
Let $(x_1^{(\ell)},\ldots, x_\ell^{(\ell)})\in \mathcal A^{(\ell)}_{m,\ell}$ be such that 
\[
	F_s(x_1^{(\ell)},\ldots, \ldots, x_\ell^{(\ell)},0,0,\ldots)\leq I_{m,\ell} + \frac 1\ell. 
\] 
Since $[0,m]^\N$ is compact in the product topology, upon passing to a subsequence we may assume that $\vect x^{(\ell)}= (x_1^{(\ell)},\ldots, x_\ell^{(\ell)},0,0,\ldots)$ converges in the product topology to some limiting sequence $\vect y \in \mathcal D$ with $y_1\leq m$. By the lower semicontinuity of $F_s$, 
\[
	F_s(\vect y) \leq \liminf_{\ell\to \infty} I_{m,\ell}.
\] 
By the definition of $\mathcal A_{m,\ell}^{(\ell)}$, there exists $\vect y^{(\ell)} \in \mathcal A_{m,\ell}$ such that $x_j^{(\ell)} = y_j^{(\ell)}$ for all $j\leq\ell$. The sequence $(\vect y^{(\ell)})$ converges to the same limiting $\vect y$ and therefore, because $\mathcal A_{m,\ell}$ is closed, we must have  $\vect y \in \mathcal A_{m,\ell}$. It follows that
\begin{equation*} 
   F_s(\vect y) \geq \inf_{\mathcal A_{m,\ell}} F_s \geq \inf_{\mathcal A} F_s.  \qedhere
\end{equation*} 
\end{proof} 

\begin{proof} [Proof of Proposition~\ref{prop:ub}] 
	Eqs.~\eqref{eq:A3}, \eqref{eq:ub1}, \eqref{eq:ub2} and Lemma~\ref{lem:ubmain} yield 
	\begin{equation} \label{eq:ub3} 
		\limsup_{n\to \infty} \frac{1}{n^{\gamma\alpha}} \log \P\Bigl( \Bigl( \frac{X_{[j:n]}}{n^\gamma} \Bigr)_{j\in \N} \in \mathcal A,\ S_n = \mu n + s_n n^\gamma \Bigr) \leq - \min\bigl(m^\alpha, \ell \kappa^\alpha, I_{m,\ell}\bigr)
	\end{equation}
	with $\min(a,b,c) = \min (a,b)$ if $c=\infty$ and $a,b<\infty$. The inequality holds true for all $\ell\in \N$ and  $m>0$. 
	
	If $\inf_{\mathcal A} F_s<\infty$, given $\eps>0$, choose $\ell_0$ and $m>0$ such that $m^\alpha$ and $\kappa \ell_0^\alpha$ are strictly larger than $\inf_{\mathcal A} F_s + \eps$. By Lemma~\ref{lem:infconv}, there exists $\ell_1\geq \ell_0$ such that 
	\[	
		 \inf_{\mathcal A} F_s - \eps \leq 
	   I_{m,\ell} \leq \inf_{\mathcal A} F_s +\eps< \min (m^\alpha, \kappa \ell_1^\alpha).
	\]
	The right side of Eq.~\eqref{eq:ub3} at $\ell =\ell_1$ is then $\leq - I_{m,\ell_1} \leq - \inf_{\mathcal A} F_s + \eps$ and we conclude by letting $\eps\downarrow 0$. 
	
	If $\inf_{\mathcal A} F_s = \infty$: given $M>0$, choose first $m$ so that $m^\alpha \geq M$ and $\ell_0 \in \N$ with $\kappa \ell_0^\alpha \geq M$. By Lemma~\ref{lem:infconv} there exists $\ell_1\geq \ell_0$  so that $I_{m,\ell_1} \geq M$ as well. The right side of Eq.~\eqref{eq:ub3} is smaller than $-M$ and we conclude by letting $M\to \infty$. 
\end{proof}

\begin{proof}[Proof of Theorem~\ref{thm:main}]
	The rate function $I_s = F_s - \inf_{\mathcal D} F_s$ is lower semicontinuous with good level sets by Lemma~\ref{lem:lsc}.
	Propositions~\ref{prop:lb} and~\ref{prop:ub} applied to $\mathcal A = \mathcal O = \mathcal D$ yield 
	\begin{equation}\label{eq:sna}
		\lim_{n\to \infty} \frac{1}{n^{\gamma\alpha}} \log \P( S_n = \mu n + s_n n^{\gamma}) = - \inf_{\mathcal D} F_s.
	\end{equation} 
	Eqs.~\eqref{eq:ldp-ub} and~\eqref{eq:ldp-lb} follow from Eq.~\eqref{eq:sna} and Propositions~\ref{prop:lb} and~\ref{prop:ub}. This completes the proof of the theorem. 
\end{proof} 

\subsection{Proof of Theorem~\ref{thm:dyntrans}} \label{sec:dyntrans}

The proof of Theorem~\ref{thm:dyntrans} builds on elementary properties of the function $g_s$, proven in Appendix~\ref{app:variational}: For $s>s_1$, the derivative $g'_s(y)$ has exactly two zeros $0<y_1(s) < y_2(s) < s$, and $g_s$ is increasing on $[0,y_1(s)]$, decreasing on $[y_1(s), y_2(s)]$ and then again increasing on $[y_2(s),\infty)$. The global minimum, for $s>s_1$, is at $y= y_2(s)$.

\begin{lemma}\label{lem:var2}
	Let $s\in \R$ and $y\geq 0$. Then $f_s(y) \leq g_s(y)$ with equality if and only if $z=0$ minimizes $g_{s-y}(z)$ on $[0,y]$. 
\end{lemma}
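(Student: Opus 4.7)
The inequality $f_s(y) \leq g_s(y)$ is immediate from the definition of $f_s$: the sequence $(y, 0, 0, \ldots)$ is admissible and $F_s(y, 0, 0, \ldots) = g_s(y)$. So the substance of the lemma is the equivalence, which I would attack by first applying the recursion~\eqref{eq:inf-convolution} to rewrite $f_s(y) = g_s(y)$ as $\inf_{z \in [0, y]} f_{s-y}(z) = g_{s-y}(0)$, and then noting that $f_{s-y}(0) = g_{s-y}(0)$ since the identically zero sequence is the unique admissible sequence with first coordinate $0$.

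The forward direction is a quick contrapositive: if some $z^* \in [0, y]$ satisfies $g_{s-y}(z^*) < g_{s-y}(0)$, then the two-term test sequence $(y, z^*, 0, 0, \ldots)$ gives $f_s(y) \leq y^\alpha + g_{s-y}(z^*) < g_s(y)$, contradicting equality.

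The reverse direction is the only substantive step. The hypothesis $g_{s-y}(z) \geq g_{s-y}(0)$ on $[0, y]$ rearranges to the scalar inequality $z^\alpha \geq z[2(s-y) - z]/(2\sigma^2)$ for $z \in [0, y]$. Given any $\vect x \in \mathcal D$ with $x_1 = y$, I plan to apply this inequality pointwise at each $x_j$ with $j \geq 2$ (valid since $x_j \leq x_1 = y$) and sum over $j$. Writing $t = \sum_{j \geq 2} x_j$ and using the elementary bound $\sum_{j \geq 2} x_j^2 \leq t^2$ to dispose of the quadratic term, the summed inequality becomes $\sum_{j \geq 2} x_j^\alpha \geq [2(s-y)t - t^2]/(2\sigma^2)$. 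The algebraic identity $2(s-y)t - t^2 = (s-y)^2 - (s-y-t)^2$ then allows the remaining pieces $y^\alpha + (s-y-t)^2/(2\sigma^2)$ of $F_s(\vect x)$ to combine with this lower bound to produce exactly $y^\alpha + (s-y)^2/(2\sigma^2) = g_s(y)$, i.e.\ $F_s(\vect x) \geq g_s(y)$ and thus $f_s(y) \geq g_s(y)$.

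The main obstacle I anticipate is spotting the telescoping: seeing that summing the scalar hypothesis termwise, combined with the trivial bound $\sum x_j^2 \leq (\sum x_j)^2$, is precisely what reconstructs the quadratic remainder $(s-y-t)^2$ needed to recover $F_s(\vect x)$ on the left side. Finiteness of the sums poses no difficulty: the inequality is trivial when $F_s(\vect x) = \infty$, and otherwise $\sum x_j^\alpha < \infty$ combined with $x_j \leq y$ yields $x_j \leq y^{1-\alpha} x_j^\alpha$ and hence $t < \infty$.
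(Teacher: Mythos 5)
Your proof is correct, and it is actually more complete than the one in the paper. The paper's proof consists of the two inequality chains
\[
	f_s(y) = y^\alpha + \inf_{z\in [0,y]} f_{s-y}(z) \leq g_s(y), \qquad
	f_s(y) \leq y^\alpha + \inf_{z\in [0,y]} g_{s-y}(z) \leq g_s(y),
\]
from which it asserts the equivalence. The second chain immediately gives the implication
\emph{$\inf_{[0,y]} g_{s-y} < g_{s-y}(0) \Rightarrow f_s(y) < g_s(y)$} (equivalently, $f_s(y)=g_s(y) \Rightarrow \inf_{[0,y]} g_{s-y}=g_{s-y}(0)$), but the converse---that $\inf_{[0,y]} g_{s-y}=g_{s-y}(0)$ forces $f_s(y)\geq g_s(y)$---does not follow from either chain, because $f_{s-y}\leq g_{s-y}$ only yields upper bounds on $\inf_{[0,y]} f_{s-y}$, never lower bounds. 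You correctly identify that converse as the substantive direction and supply it: applying the hypothesis $z^\alpha \geq \bigl(2(s-y)z - z^2\bigr)/(2\sigma^2)$ pointwise at each $x_j$ ($j\geq 2$), summing, and discarding the quadratic term via $\sum_{j\geq 2} x_j^2 \leq \bigl(\sum_{j\geq 2} x_j\bigr)^2$ reconstructs $(s-y)^2 - (s-y-t)^2$ and gives $F_s(\vect x)\geq g_s(y)$ for every admissible $\vect x$. Your finiteness remark is also sound. In short, your argument is a valid and welcome sharpening of the published proof rather than a duplicate of it.
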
 

\begin{proof} 
By the recursive equality~\eqref{eq:inf-convolution},
\[
	f_s(y) = y^\alpha + \inf_{z\in [0,y]} f_{s-y}(z) \leq y^\alpha + f_{s-y}(0) = y^\alpha + \frac{1}{2\sigma^2} (s-y)^2 = g_s(y).
\] 
Inserting $f_{s-y}\leq g_{s-y}$ we also get 
\[
	f_s(y) \leq y^\alpha + \inf_{z\in [0,y]} g_{s-y}(z) \leq y^\alpha + g_{s-y}(0) = g_s(y) 
\] 
and $f_s(y) \leq g_s(y)$ with equality if and only if $\inf_{z\in [0,y]} g_{s-y}(z) = g_{s-y}(0)$. 
\end{proof}

Lemma~\ref{lem:var2} leaves two possible scenarios for $f_s(y)= g_s(y)$: either $s-y\leq s_1$ and $z=0$ is in fact a global minimizer of $g_{s-y}(z)$; or $s-y>s_1$ but the cutoff $z\leq y$ makes the global maximum $z=y_2(s-y)$ of $g_{s-y}$ inaccessible. This leads to the following lemma. 

\begin{lemma} \label{lem:var3}
	Let $y_0(s):= \inf \{s>0: \, g_s(y) = g_s(0)\}$, with $\inf \varnothing = \infty$. Then, $f_s(y) < g_s(y)$ if and only if $s>s_1$ and $y_0(s-y) < y < s-s_1$. 
\end{lemma}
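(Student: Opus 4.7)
The plan is to reduce the lemma to a one-variable question about $g_t$ by invoking Lemma~\ref{lem:var2}: we have $f_s(y) < g_s(y)$ precisely when $z = 0$ fails to minimize $g_{s-y}$ on $[0,y]$, i.e., when there exists $z \in (0,y]$ with $g_{s-y}(z) < g_{s-y}(0)$. Setting $t := s - y$, the task becomes to decide when the strict sublevel set $\{z > 0 : g_t(z) < g_t(0)\}$ intersects the interval $(0, y]$. Everything else is bookkeeping about the shape of $g_t$ coming from Appendix~\ref{app:variational}.

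First, I would dispose of the case $t \leq s_1$: by the appendix, $z = 0$ is then a global minimizer of $g_t$ on $\R_+$, so $\{z > 0 : g_t(z) < g_t(0)\}$ is empty and $f_s(y) = g_s(y)$ regardless of $y$. For the remaining case $t > s_1$, I would use the finer information recalled just before the lemma: $g_t$ is strictly increasing on $[0, y_1(t)]$, strictly decreasing on $[y_1(t), y_2(t)]$, and strictly increasing on $[y_2(t), \infty)$, with the global minimum at $y_2(t)$ satisfying $g_t(y_2(t)) < g_t(0)$ and $g_t(y) \to \infty$ as $y \to \infty$. Two applications of the intermediate value theorem produce exactly two positive solutions of $g_t(z) = g_t(0)$: one in $(y_1(t), y_2(t))$, which coincides with $y_0(t)$ as defined in the statement, and a second one $y_0'(t) \in (y_2(t), \infty)$. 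Therefore $\{z > 0 : g_t(z) < g_t(0)\} = (y_0(t), y_0'(t))$, and this set meets $(0, y]$ iff $y > y_0(t)$; the upper endpoint $y_0'(t)$ plays no role because a witness $z$ just above $y_0(t)$ automatically lies in $(0, y]$ whenever $y > y_0(t)$.

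Assembling the two cases, $f_s(y) < g_s(y)$ holds iff $t > s_1$ and $y > y_0(t)$, which rewrites as $s > s_1$, $y < s - s_1$, and $y_0(s - y) < y$, exactly the conditions in the statement. The only subtlety worth a sentence is boundary behavior: at $t = s_1$, or at $y = y_0(t)$ for $t > s_1$, the value $g_t(0)$ is still achieved as a (possibly tied) minimum on $[0, y]$, so Lemma~\ref{lem:var2} yields $f_s(y) = g_s(y)$, which is what forces the strict inequalities in the conclusion. I do not expect a real obstacle here, since once the qualitative shape of $g_t$ from the appendix is in hand, the argument is a short intermediate-value-theorem case analysis.
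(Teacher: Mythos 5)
Your proposal is correct and follows essentially the same route as the paper: reduce via Lemma~\ref{lem:var2} to the question of whether $z=0$ minimizes $g_{s-y}$ on $[0,y]$, then exploit the shape of $g_t$ from Appendix~\ref{app:variational}. The only difference is cosmetic — you fully characterize the strict sublevel set $\{z>0: g_t(z)<g_t(0)\}$ as an open interval $(y_0(t),y_0'(t))$, whereas the paper gets by with the one-sided observation that $g_{s-y}\geq g_{s-y}(0)$ on $[0,y_0(s-y)]$ for the forward direction and picks a witness $z=y_0(s-y)+\eps$ for the converse.
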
 

\begin{proof} 
	By Lemma~\ref{lem:var2}, $f_s(y)< g_s(y)$ if and only if $\inf_{z\in [0,y]} g_{s-y}(z) < g_{s-y}(0)$. If this is the case, then necessarily $s-y>s_1$ (because $z=0$ is not a global minimizer of $g_{s-y}$) and, as $g_{s-y} \geq g_{s-y}(0)$ on $[0, y_0(s-y)]$, hence $y> y_0(s-y)$. Conversely, if $y_0(s-y) < y< s-s_1$, then we may pick $z= y_0(s-y)+\eps$ with $\eps$ sufficiently small and have $z\leq y$ and $g_{s-y}(z)< g_{s-y}(0)$. 
\end{proof} 

\begin{proof}[Proof of Theorem~\ref{thm:dyntrans}]
	Set 
	\[
		s_2:= \inf\{ s >s_1:\ \exists y \geq 0:\, \inf_{z\in [0,y]} g_{s-y}(z) < g_{s-y}(0)\}.
	\] 
	The difference 
	\begin{equation}\label{eq:gtdiff}
		g_t(z) - g_t(0) = z^\alpha + \frac{1}{2\sigma^2}\bigl( - 2 t z + z^2\bigr)
	\end{equation} 
	goes to  $-\infty$ as $t\to \infty$ at fixed $y$ and $z>0$, therefore $g_{s-y}(z)<g_s(0)$ for all sufficiently large $s$. It follows that $s_2<\infty$. 

	If $s>s_2$ then by Lemma~\ref{lem:var3}, $s-s_1> y > y_0(s-y)$. Now, $t\mapsto y_0(t)$ is decreasing therefore $y_0(s-y) \geq y_0(s)$ and $s-s_1 > y_0(s)$. But, as $s\downarrow s_1$, $y_0(s)\geq y_1(s)\to y_*>0$ (with $y_*$ defined in Appendix~\ref{app:variational}), so the inequality $s-s_1 \geq y_0(s)$ cannot hold true for $s$ close to $s_1$, hence $s_2>s_1$. 

	Furthermore, the difference~\eqref{eq:gtdiff} decreases as $t$ increases. Therefore, if $g_{s-y}(z)< g_{s-y}(0)$ for some $z\leq y$, then this holds true as well for all $s'\geq s$. It follows that $\inf_{z\in [0,y]} g_{s-y}(z) < g_{s-y}(0)$ for all $s>s_2$, and $\inf_{z\in [0,y]} g_{s-y}(z) = g_s(0)$ for all $s< s_2$ and then, by continuity, for $s=s_2$. Lemma~\ref{lem:var2} yields $f_s=g_s$ on $\R_+$ if and only if $s>s_2$. Finally, for $s>s_2$, the set $J_s$ of $y$'s with $f_s(y) < g_s(y)$ is equal to 
	\[
		J_s = \bigl\{ y:\, y_0(s-y) < y < s-s_1\bigr\} \subset \bigl(y_0(s), s-s_1\bigr)
	\] 
	with $y_0(s)>0$. 		
\end{proof} 

\appendix 

\section{Auxiliary variational problem} \label{app:variational}

Here we analyze the function $g_s(y) = y^\alpha + \frac1{2\sigma^2} (s-y)^2$. The properties proven here are implicit in Nagaev \cite{nagaev1968}, see also Lemma~2.4 and 2.14 as well as Fig.~1 in  Ercolani, Jansen, Ueltschi \cite{ercolani-jansen-ueltschi2019}. We prove the properties in detail for the reader's convenience. 

Clearly $g_s$ is increasing on $[s,\infty)$, thus we only need to investigate $s>0$ and $y\in [0,s]$. The first and second derivatives are 
\[
	g_s'(y) = \alpha y^{\alpha-1} - \frac{s-y}{\sigma^2},\quad g''_s(y) = - \alpha(1-\alpha) y^{\alpha-2} +\frac 1{\sigma^2}. 
\] 	
Set $y_*:= ( (1-\alpha)\alpha \sigma^2)^\gamma$. Then $g''_s(y_*)=0$ and $g'_s(y)$ is decreasing on $[0,y_*]$ and increasing on $[y_*,\infty)$. 
At $0$ and $\infty$ the derivative goes to $\infty$. The value at $y_*$ is strictly negative if and only if 
\[
	s > \alpha \sigma^2 y_*^{\alpha-1} + y_* 
	 = \frac{2-\alpha}{1-\alpha} \bigl( (1-\alpha)\alpha  \sigma^2 \bigr)^\gamma 
	 = \frac 1 \gamma (\alpha \sigma^2)^\gamma (1- \alpha)^{\gamma-1} =:s_0.
\] 
Thus, if $s\leq s_0$ then $g'_s(y_*) \geq 0$ and $g_s$ is increasing on $[0,\infty)$. If $s>s_0$, the derivative has exactly two zeros, we label them as 
\[
	0 < y_1(s) < y_* < y_2(s) < s. 
\] 
The first zero is a local maximizer and the second zero is a local minimizer. Thus, the global minimum of $g_s$ is $g_s(0)$ or $g_s(y_2(s))$. The difference between the two is 
\begin{align*}
	g_s(y_2(s)) - g_s(0)& = y_2(s)\Bigl( y_2(s)^{\alpha-1} + \frac{1}{2\sigma^2}(y_2(s) - 2s) \Bigr)  \\
	& = y_2(s)\Bigl( \frac{1}{\alpha}\frac{s-y_2(s)}{\sigma^2} + \frac{1}{2\sigma^2}(y_2(s) - 2s) \Bigr)\\
	& = \frac{y_2(s)}{2 \sigma^2 \alpha}\Bigl( (2-\alpha)(s- y_2(s)) - \alpha s\Bigr). 
\end{align*}
Hence, the two local minima are equal if and only if $s - y_2(s) = \alpha s/(2-\alpha)$ and $y_2(s) = (2-2\alpha) \gamma s $. The equation $g'_s(y_2(s)) =0$ yields 
\[
	\alpha \Bigl( 2 (1-\alpha) \gamma s\Bigr)^{\alpha-1} = \frac{\alpha \gamma s}{\sigma^2}
\]
and 
\[
	s = \frac 1\gamma (\sigma^2)^\gamma ( 2-2\alpha)^{\gamma-1} =s_1. 
\] 
Notice that $\gamma < 1$ and 
\[
	\frac{s_1}{s_0} = \frac{2^{\gamma-1}}{\alpha^\gamma} = \frac 1 2 \Bigl( \frac{2}{2- \gamma^{-1}}\Bigr)^\gamma = \frac 12 \Bigl( 1- \frac{1}{2\gamma}\Bigr)^{-\gamma}> \frac 12\Bigl( 1- \frac 1 2\Bigr)^{-1} = 1,
\]
so we have indeed $s_1>s_0$. The difference $g_s(y_2(s))- g_s(0)$ is decreasing on $(s_0,\infty)$ because 
\begin{align*}
	\frac{\dd}{\dd s} \bigl( g_s(y_2(s)) - g_s(0)\bigr) & = g'_s(y_2(s)) y'_2(s) + \frac{1}{\sigma^2}\Bigl( \bigl(s-y_2(s)\bigr) - s\Bigr)\\
	& = g'(y_2(s)) y'_2(s) - \frac{y_2(s)}{\sigma^2}\\
	& = -  \frac{y_2(s)}{\sigma^2}< 0. 
\end{align*}
The difference vanishes if and only if $s=s_1$. Thus, we have shown that 
 $g_s(y_2(s)) <g_s(0)$ if and only if $s>s_1$.

\subsubsection*{Acknowledgments.} The project started  within the DFG Research Training Network GRK 2123 \emph{High-dimensional phenomena in probability -- fluctuations and discontinuity}. I thank Sander Dommers for helpful discussions during the initial stages of the project at Ruhr-Universit{\"a}t Bochum. 


\providecommand{\bysame}{\leavevmode\hbox to3em{\hrulefill}\thinspace}
\providecommand{\MR}{\relax\ifhmode\unskip\space\fi MR }
\providecommand{\MRhref}[2]{%
  \href{http://www.ams.org/mathscinet-getitem?mr=#1}{#2}
}
\providecommand{\href}[2]{#2}

\end{document}